\documentclass[pdflatex,sn-mathphys,Numbered]{sn-jnl}

\usepackage{graphicx}
\usepackage{subfig}
\usepackage{amsmath,amssymb,amsfonts}
\usepackage{mathtools}
\usepackage{amsthm}
\usepackage{xcolor}
\usepackage{textcomp}
\usepackage{manyfoot}
\usepackage{booktabs}
\usepackage{placeins}

\theoremstyle{plain}
\newtheorem{theorem}{Theorem}[section]
\newtheorem{lemma}[theorem]{Lemma}
\newtheorem{corollary}[theorem]{Corollary}

\theoremstyle{definition}
\newtheorem{definition}[theorem]{Definition}

\theoremstyle{remark}
\newtheorem{remark}[theorem]{Remark}

\begin{document}

\title[Sparse space-time spectral methods can time-step]{Sparse space-time spectral methods can time-step by peel and pass}

\author*[1]{\fnm{Timon S.} \sur{Gutleb}}\email{T.S.Gutleb@leeds.ac.uk}

\affil[1]{\orgdiv{School of Computer Science}, \orgname{University of Leeds}, \orgaddress{\city{Leeds}, \country{UK}}}

\abstract{Global space-time spectral methods give spectral accuracy in time but typically require the whole space-time history to be resolved and stored on a single tensor-product domain $T \times \Omega$. We record that in an endpoint-benign Legendre or Chebyshev-$T$ time basis, whose polynomials all equal one at the right endpoint, the final time slice of a space-time block is recovered exactly by summing the stored coefficients along the time index. This peel-and-pass step is a special case of a Jacobi endpoint identity, which also gives derivative formulae for higher-order equations. Writing such higher-order equations as first-order systems preserves the benign value-passing structure. The result is a sparse space-time spectral element method that advances block by block, stores only one block, and needs far fewer time coefficients per solve for long-time problems. We prove the identities, give resident-memory, solve-cost and error-propagation models, and demonstrate the method on $(1{+}1)$D heat, wave and Klein--Gordon equations, and on $(2{+}1)$D fractional heat on the disk with weighted Zernike polynomials in space.}

\keywords{space-time spectral methods, spectral elements, orthogonal polynomials, ultraspherical spectral method, time stepping}

\pacs[MSC Classification]{65M70, 65M12, 65M60, 41A10}

\maketitle
\hypersetup{bookmarksdepth=3}

\section{Introduction}
Spectral methods approximate the solution of a differential equation by a truncated expansion in a global basis, converging rapidly for sufficiently well-behaved solutions. The ultraspherical spectral method of Olver and Townsend \cite{olver2013fast} represents the solution in a Gegenbauer or, more generally, Jacobi basis and expresses differentiation and multiplication as \emph{sparse, banded} operators between these bases, in contrast to the dense and increasingly ill-conditioned matrices of classical collocation. This pairing of spectral accuracy with banded, well-conditioned linear algebra \cite{olver2020fast} has since been carried to multivariate and non-trivial geometries by constructing sparse-operator bases of orthogonal polynomials adapted to triangles \cite{olver2019sparse}, disks and disk slices \cite{vasil_burns_lecoanet_olver_brown_oishi_2016_polar,snowball_olver_2019_disk_slices}, spherical caps \cite{snowball_olver_2020_spherical_caps}, algebraic curves \cite{fasondini_olver_xu_2022_curves,fasondini2023orthogonal,yao2026sparse}, annuli and cylinders \cite{papadopoulos2024building,papadopoulos_olver_2025_disk_annuli}, and generalised non-classical orthogonal polynomials \cite{gutleb_olver_slevinsky_2023_measure,gumerov_rigg_slevinsky_2026_measure_modification}. Decomposing a complicated spatial domain into sparse spectral elements of this kind is likewise established \cite{fortunato2021ultraspherical,olver2019sparse,papadopoulos2024building,papadopoulos_olver_2025_disk_annuli}.\\
When the differential equation is time-dependent, the dominant strategy is the method of lines: discretise space and march the resulting system of ordinary differential equations with a time stepper \cite{hesthaven2007spectral,cheng2023solving}. An appealing alternative is to place time on the same footing as space, expanding the solution in a global polynomial basis in time as well and solving the whole space-time problem at once. Earlier spectral-in-time constructions include time-spectral methods for hyperbolic and parabolic equations \cite{talezer1986hyperbolic,talezer1989parabolic,ierley_spencer_worthing_1992_time_spectral}, as well as single- and multi-interval Legendre methods in time \cite{tang_ma_2002_multi_interval}. The authors in \cite{scheffel2017optimizing} use the generalised weighted residual method with Chebyshev temporal subdomains to improve sparsity and memory use. More recent space-time spectral methods include Legendre and Chebyshev collocation schemes for time-dependent PDEs \cite{lui2017legendre,lui_nataj_2020_chebyshev,lui_nataj_2021_linear}, a Stokes formulation and thesis treatment by Kaur and Lui \cite{kaur_space-time_2022,kaur_lui_stokes_2023}, and linear and nonlinear examples by Kaur, Lui, Nataj and Wilegoda Liyanage \cite{kaur_lui_nataj_wilegoda_2025}. Related spectral-in-time constructions have also been used for fractional problems \cite{pu_fasondini_2023_fractional}.\\  The Achilles heel of global space-time spectral methods, and the reason they are seldom used for long-time or higher-dimensional solves, is that treating time as another coordinate raises the dimension of the discretised problem by one. The unknowns, operators and solver work then live on a full space-time tensor rather than on a spatial state advanced by a time stepper, increasing both computational complexity and resident memory. For persistent (e.g. oscillatory) or otherwise time-resolved dynamics this cost is compounded by time horizon: the number of coefficients needed on a single global interval grows with the interval length, as the method must store and solve for the whole history of the solution on one space-time block.\\
This approach has thus been described as not allowing time stepping \cite{kaur_space-time_2022}. In this paper we record that this obstruction belongs to a specific single-global-block formulation, not to space-time spectral methods themselves. For a Jacobi time basis, the solution on the final time slice of a space-time block is recovered from the stored coefficients by a simple endpoint contraction along the time index. In computation we focus on the endpoint-benign Legendre and Chebyshev-$T$ cases, where this contraction is nothing more than a summation of coefficients. This lets us ``peel off'' the final slice of one block in coefficient space and pass it as the initial condition of the next, decomposing the time axis into elements. The method then holds only one space-time block in memory at a time, reducing the resident memory from that of the whole history to that of a single block while preserving spectral accuracy and improving the geometric rate on each shorter block.

\section{Sparse Space-Time Blocks}
\subsection{Orthogonal Polynomial Bases}\label{subsec:opmethods}
Let $\{p_k\}_{k\ge 0}$ be a family of orthogonal polynomials on a given domain. Sparse spectral methods approximate functions $f\approx\sum_k f_k p_k$ by truncation and represent differentiation, basis conversion and multiplication by sparse operators between compatible polynomial bases \cite{olver2013fast,olver2020fast}. After boundary and initial rows are appended, this gives sparse bordered linear systems. We shall make no assumption on the spatial domain $\Omega$ beyond the availability of such a basis and the associated sparse operators.
\subsection{Space-Time Operators}\label{subsec:space-time}
Let $\Omega \subset \mathbb{R}^M$ be the spatial domain, resolved by a complete orthogonal-polynomial basis $\{q_\mathbf{k}(\mathbf{x})\}$ of the kind just described, and let $T = [0, T_{\mathrm{end}}]$ be the time interval. A space-time spectral method treats time as an additional coordinate and seeks the solution on the product domain $T \times \Omega \subset \mathbb{R}^{M+1}$ in the product basis
\begin{align*}
    u(t, \mathbf{x}) = \sum_{j, \mathbf{k}} u_{j,\mathbf{k}}\, p_j(t)\, q_\mathbf{k}(\mathbf{x}),
\end{align*}
where $\{p_j\}$ is a univariate orthogonal-polynomial basis on $T$. The space-time differential operator is assembled on this product basis and bordered with the spatial boundary conditions and temporal initial condition(s), cf. the ultraspherical spectral method \cite{olver2013fast,olver2020fast}. For example, the heat operator $\partial_t-\partial_{xx}$ on a rectangular space-time block has the coefficient-space form
\begin{align*}
    S_x \otimes D_t - D_x^{(2)} \otimes S_t,
\end{align*}
up to affine scaling factors, where the factors are sparse differentiation and conversion matrices, cf. \cite{olver2020fast}. Boundary and initial conditions add bordered rows obtained from endpoint evaluation rows tensored with identities. For the Legendre--Legendre discretisations used in our numerical experiments, Figure \ref{fig:operator-sparsity} shows representative sparsity patterns.\\
\begin{figure}\centering
     \subfloat[Heat equation]
    {{ \centering \includegraphics[width=0.31\textwidth]{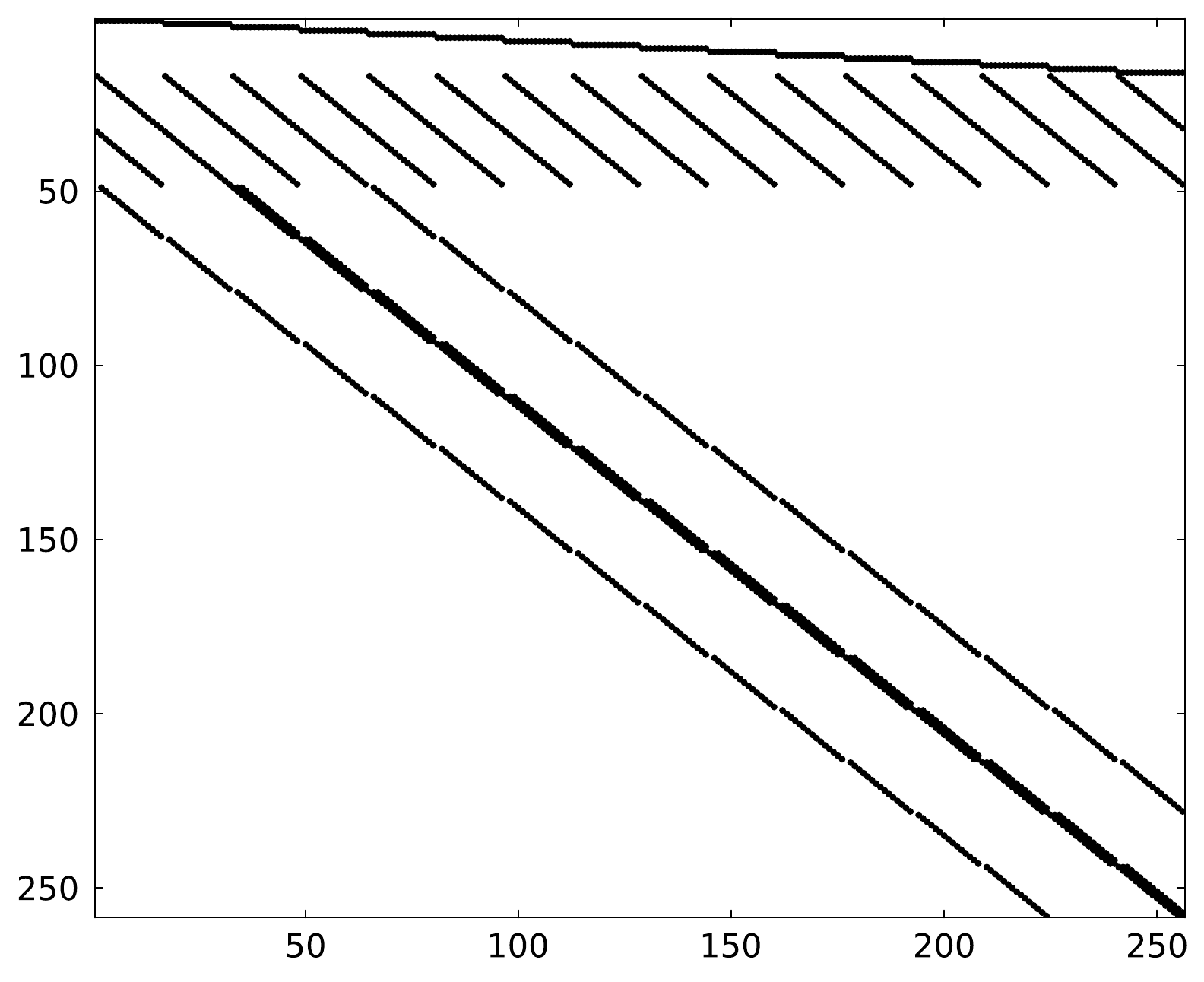} }}
     \subfloat[Wave system]
    {{ \centering \includegraphics[width=0.31\textwidth]{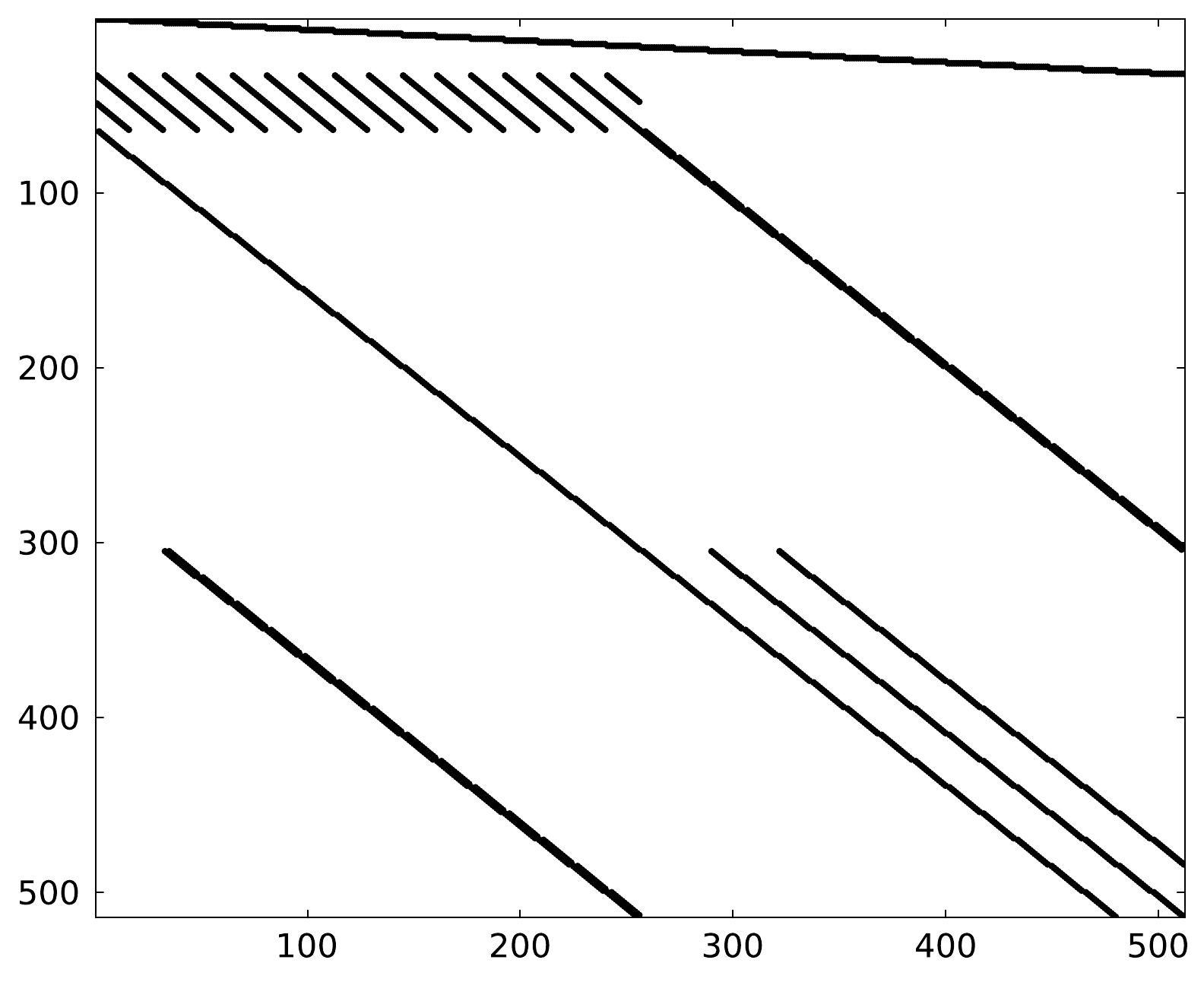} }}
     \subfloat[Klein--Gordon system]
    {{ \centering \includegraphics[width=0.31\textwidth]{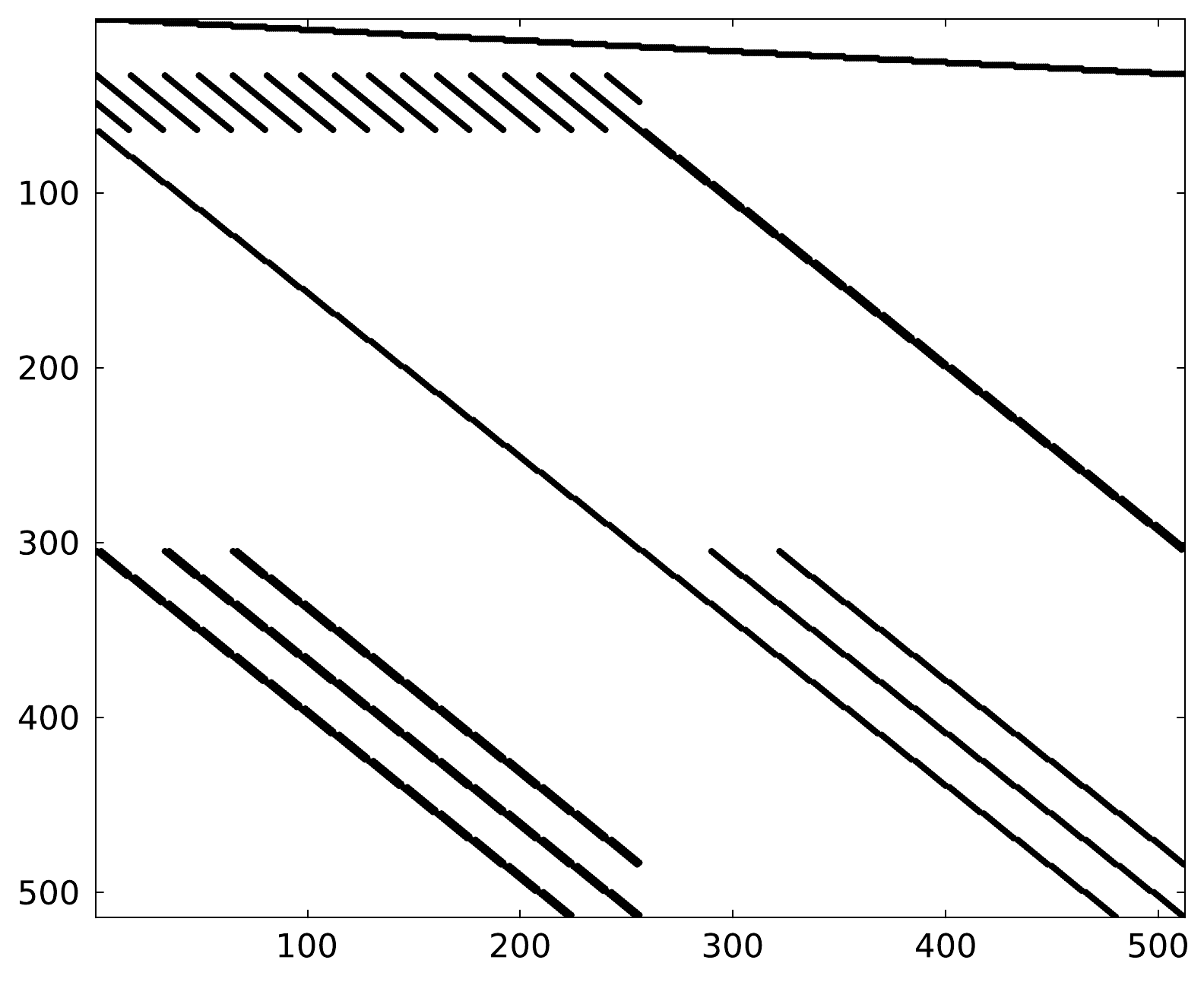} }}
    \caption{Nonzero patterns of single-block Legendre--Legendre operators ($N=16$ per coordinate) for the heat, first-order wave, and Klein--Gordon ($m^2=20$) systems of Section \ref{sec:experiments}. Top rows are initial and boundary conditions; the rest are PDE rows.}
    \label{fig:operator-sparsity}
    \end{figure}
This is spectrally accurate in time but pays for it in memory and computational complexity. For nontrivial long-time dynamics, a single global time expansion needs more coefficients as the time window grows, and the coefficient tensor $u_{j,\mathbf{k}}$ encodes the solution at every instant simultaneously. For deep-time integration, and acutely in $(3{+}1)$ dimensions where the spatial factor is already large, the global space-time approach can therefore become impractical.\\
Our contribution is not to reduce the intrinsic dimensionality of the space-time discretisation. Each block remains an $(M{+}1)$-dimensional problem. The gain is instead to replace one large global space-time solve by a sequence of smaller solves, as sketched in Figure~\ref{fig:schematic}, each needing substantially fewer time coefficients than the full interval.
\section{Peel and Pass}\label{sec:peelpass}
Domain decomposition is central to sparse spectral element methods. As discussed in the introduction, spectral intervals in time and slab-wise space-time formulations also have precedents. Among them, the generalised weighted residual method with Chebyshev temporal subdomains \cite{scheffel2017optimizing} is closest in spirit: the endpoint solution from one time interval supplies the data or initial vector for the next, though its sparse matrices arise from local coupling in the residual/Jacobian system, rather than from ultraspherical-style differentiation and conversion operators acting directly on coefficient vectors.\\
We therefore do not claim that block-by-block space-time marching, or the use of spectral intervals in time, is new in itself. The distinction is that the sparse coefficient-space block itself supplies the interface data by endpoint contraction of the solved coefficients. No coupled multi-interval system, lifting construction, weak interface condition, quadrature, or re-expansion is needed. In endpoint-benign Legendre and Chebyshev-$T$ bases, value passing is simply summation along the time index. The endpoint values of Jacobi polynomials are of course classical. What is isolated here is their use as the interface mechanism itself, so that the solved block furnishes the next block's initial data exactly. The simplicity of this identity is precisely what makes it agnostic to the spatial discretisation, so that it composes with arbitrary sparse orthogonal-polynomial bases on non-trivial geometries. This turns the sparse polynomial space-time solve into a memory-bounded sequential time-stepper (or more accurately slab-marcher) while keeping the spectral-in-time accuracy and sparse space-time operator structure intact. Each step may overwrite the previous space-time block and keep only the data needed to initialise the next one.\\
\begin{figure}\centering
     \subfloat[]
    {{ \centering \includegraphics[width=0.48\textwidth]{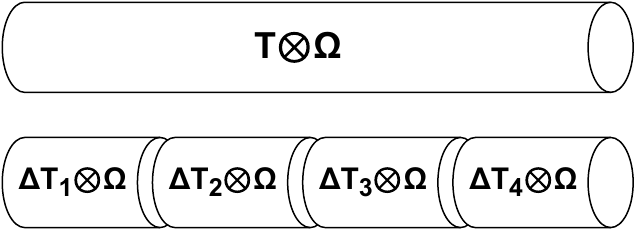} }}
     \subfloat[]
    {{ \centering \includegraphics[width=0.48\textwidth]{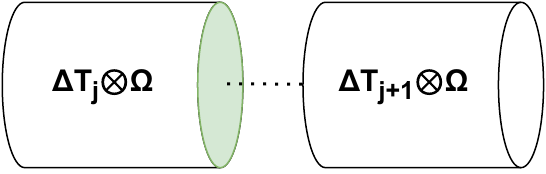} }}
    \caption{Decomposition of $T \times \Omega$ into time blocks $\Delta T_j \times \Omega$, and peel and pass of the final time layer, with derivative data obtained by endpoint contractions when needed.}
    \label{fig:schematic}
    \end{figure}
We first record the endpoint identity behind peel and pass for a general time basis, then specialise it to Jacobi polynomials. The general formula is useful because it identifies the endpoint weights explicitly, but it also exposes a practical issue: for some Jacobi parameters these weights grow with degree and may amplify high-degree coefficient noise. One could rescale any Jacobi basis to normalise a chosen endpoint functional, but this merely changes the coefficient coordinates with no advantage over the naturally endpoint-benign Legendre or Chebyshev-$T$ bases. We therefore single out Legendre and Chebyshev-$T$ as the preferred bases for the rest of the paper, since their value-passing weights are identically one.

\medskip
\begin{lemma}[Jacobi endpoint value identity]\label{lem:jacobi-endpoint-peelpass}
    Let a space-time block have temporal interval $[(n-1)\Delta t,n\Delta t]$, $n \in \mathbb{N}$, and spatial domain $\Omega \in \mathbb{R}^M$, and let $\{p_j^{(n)}\}$ be a polynomial basis on the temporal interval and $\{q_\mathbf{k}\}$ a polynomial basis on $\Omega$. Suppose that, on this block, we have
    \begin{align*}
        u(t,\mathbf{x}) = \sum_{j,\mathbf{k}} u_{j,\mathbf{k}} p_j^{(n)}(t) q_\mathbf{k}(\mathbf{x}).
    \end{align*}
    Then the final time slice of this block has the spatial-basis representation
    \begin{align*}
        u(n\Delta t,\mathbf{x}) = \sum_{\mathbf{k}} \left(\sum_{j} u_{j,\mathbf{k}} p_j^{(n)}(n\Delta t)\right) q_\mathbf{k}(\mathbf{x}).
    \end{align*}
    In particular, if $p_j^{(n)}(t)$ are shifted Jacobi polynomials $\tilde{P}_j^{(a,b),n}(t)$ with $a,b>-1$, then
    \begin{align*}
        u(n\Delta t,\mathbf{x}) = \sum_{\mathbf{k}} \left(\sum_{j} \binom{j+a}{j}\, u_{j,\mathbf{k}} \right) q_\mathbf{k}(\mathbf{x}) = \sum_{\mathbf{k}} \left(\sum_{j} \frac{(a+1)_j}{j!}\, u_{j,\mathbf{k}} \right) q_\mathbf{k}(\mathbf{x}),
    \end{align*}
    where $(a+1)_j$ denotes the Pochhammer symbol or rising factorial. In particular the peel-and-pass weights are independent of the block index $n$ and of the step size $\Delta t$.
\end{lemma}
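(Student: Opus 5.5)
The plan is to prove the first identity by evaluation and interchange of finite sums, and then reduce the Jacobi case to the classical endpoint value $P_j^{(a,b)}(1)$. First, since the expansion is a truncated (finite) sum, we may set $t = n\Delta t$ directly in
\begin{align*}
u(t,\mathbf{x}) = \sum_{j,\mathbf{k}} u_{j,\mathbf{k}}\, p_j^{(n)}(t)\, q_\mathbf{k}(\mathbf{x}).
\end{align*}
Grouping the terms by $\mathbf{k}$ then gives
\begin{align*}
u(n\Delta t,\mathbf{x}) = \sum_{\mathbf{k}} \Big(\sum_j u_{j,\mathbf{k}}\, p_j^{(n)}(n\Delta t)\Big) q_\mathbf{k}(\mathbf{x}).
\end{align*}
Uniqueness of the spatial representation follows from linear independence of $\{q_\mathbf{k}\}$, so the bracketed quantity is precisely the $\mathbf{k}$-th spatial coefficient of the final slice. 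If one insists on infinite expansions, one would add an absolute-convergence hypothesis to justify the interchange. I would remark on this but not dwell on it, since the method only uses truncations.

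For the Jacobi specialisation, I would fix the convention for the shifted basis: $\tilde P_j^{(a,b),n}(t) = P_j^{(a,b)}(\xi(t))$ with the affine map
\begin{align*}
\xi(t) = \frac{2(t-(n-1)\Delta t)}{\Delta t} - 1,
\end{align*}
which sends $[(n-1)\Delta t, n\Delta t]$ onto $[-1,1]$ and has $\xi(n\Delta t) = 1$. Because the affine map is built so that the right endpoint always goes to $\xi = 1$, this observation already gives the independence of the weights from $n$ and $\Delta t$: they are just $P_j^{(a,b)}(1)$.

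It then remains to recall the classical value $P_j^{(a,b)}(1) = \binom{j+a}{j} = (a+1)_j/j!$. I would derive it from the Rodrigues-type explicit sum
\begin{align*}
P_j^{(a,b)}(x) = \sum_{s=0}^{j}\binom{j+a}{j-s}\binom{j+b}{s}\left(\frac{x-1}{2}\right)^s\left(\frac{x+1}{2}\right)^{j-s}.
\end{align*}
At $x = 1$ only the $s = 0$ term survives, and it equals $\binom{j+a}{j}$. Alternatively, one can cite the hypergeometric form $P_j^{(a,b)}(x) = \frac{(a+1)_j}{j!}\,{}_2F_1(-j,\, j+a+b+1;\, a+1;\, \tfrac{1-x}{2})$, whose ${}_2F_1$ factor equals $1$ at $x = 1$.

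The equality $\binom{j+a}{j} = (a+1)_j/j!$ for non-integer $a > -1$ is read via the Gamma-function definition of the binomial, $\Gamma(j+a+1)/(\Gamma(a+1)\,j!)$, which equals $(a+1)_j/j!$. The only real care needed is in this normalisation. The statement presupposes the standard (Szeg\H{o}) normalisation of $P_j^{(a,b)}$ and the orientation of the shift with $t = n\Delta t \mapsto +1$. I would state both explicitly, since reversing the orientation would produce $(-1)^j\binom{j+b}{j}$ instead, and that is the main potential pitfall rather than any genuine difficulty.
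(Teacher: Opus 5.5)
Your proposal is correct and follows the paper's proof. You evaluate the finite expansion at $t=n\Delta t$, use an affine map identical to the paper's $m_n$ that sends the right endpoint to $1$, and invoke $P_j^{(a,b)}(1)=\binom{j+a}{j}=(a+1)_j/j!$; the only difference is that you derive this endpoint value from the explicit sum or hypergeometric representation, whereas the paper simply cites DLMF Table 18.6.1.
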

\begin{proof}
    The first identity is the definition of the partial expansion in $\mathbf{x}$ obtained by collecting the time basis at $t = n\Delta t$. For the Jacobi case, let $m_n(t) = \tfrac{2t - (2n-1)\Delta t}{\Delta t}$ be the affine bijection mapping $[(n-1)\Delta t, n\Delta t]$ onto the reference interval $[-1,1]$, so that the shifted basis is $\tilde{P}_j^{(a,b),n}(t) = P_j^{(a,b)}(m_n(t))$. Since $m_n(n\Delta t) = 1$, evaluating at the final time gives $\tilde{P}_j^{(a,b),n}(n\Delta t) = P_j^{(a,b)}(1)$. By DLMF Table 18.6.1 \cite{NIST:DLMF} the right endpoint value of a Jacobi polynomial is $P_j^{(a,b)}(1) = \binom{j+a}{j} = (a+1)_j / j!$, which is independent of $b$, of $n$ and of $\Delta t$. Substituting into the general formula completes the proof.
\end{proof}

\medskip
\begin{corollary}[Legendre and Chebyshev endpoint values]\label{cor:legendre}
    Under the assumptions of Lemma \ref{lem:jacobi-endpoint-peelpass}, if the time basis consists of shifted Legendre polynomials ($a=b=0$) the peeled-off final slice is simply the time-summed expansion
    \begin{align*}
        u(n\Delta t,\mathbf{x}) = \sum_{\mathbf{k}} \left(\sum_{j} u_{j,\mathbf{k}} \right) q_\mathbf{k}(\mathbf{x}),
    \end{align*}
    The same plain summation holds for the Chebyshev basis of the first kind.
\end{corollary}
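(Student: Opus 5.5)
The Legendre case follows directly from Lemma \ref{lem:jacobi-endpoint-peelpass}. Setting $a=b=0$, the weights there become $\binom{j}{j}=1$, or equivalently $(1)_j/j! = j!/j! = 1$, for every $j$. The inner contraction $\sum_j \binom{j+a}{j} u_{j,\mathbf{k}}$ therefore reduces to the plain time-sum $\sum_j u_{j,\mathbf{k}}$. The shifted Legendre basis is exactly $\tilde{P}_j^{(0,0),n}$, so nothing beyond this substitution is needed.

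The Chebyshev-$T$ case needs one observation. The Chebyshev polynomials of the first kind are Jacobi polynomials with $a=b=-\tfrac12$, but only up to normalisation: $T_j = P_j^{(-1/2,-1/2)}/P_j^{(-1/2,-1/2)}(1)$. Substituting $a=-\tfrac12$ into the Lemma would therefore give the weights $(1/2)_j/j!$, which are not equal to one. This is precisely why the statement treats the Chebyshev basis separately. I would not route the argument through the Jacobi formula. Instead I would apply the first, general identity of Lemma \ref{lem:jacobi-endpoint-peelpass} with $p_j^{(n)}(t) = T_j(m_n(t))$, where $m_n$ is the affine map from the proof of the Lemma.

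Since $m_n(n\Delta t)=1$, the identity requires the value $T_j(1)$. Writing $T_j(\cos\theta)=\cos(j\theta)$ and taking $\theta=0$ gives $T_j(1)=1$ for all $j$. Inserting this value into the general formula gives the same time-summed expansion. As in the Lemma, the result is independent of $n$ and $\Delta t$.

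The only point needing care is this normalisation mismatch between $T_j$ and $P_j^{(-1/2,-1/2)}$. I will state explicitly that the Chebyshev claim refers to the standard basis $T_j$, not to the Jacobi-normalised $P_j^{(-1/2,-1/2)}$. Everything else is a direct substitution.
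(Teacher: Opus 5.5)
Your proposal is correct and follows the paper's proof: for Legendre you substitute $a=b=0$ into Lemma \ref{lem:jacobi-endpoint-peelpass} to get $P_j(1)=\binom{j}{j}=1$, and for Chebyshev-$T$ you apply the general identity with $T_j(1)=1$. Your remark that $T_j$ is only a rescaled $P_j^{(-1/2,-1/2)}$, so the Jacobi weights $(1/2)_j/j!$ must not be used, is a sound clarification that the paper leaves implicit.
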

\begin{proof}
    For Legendre polynomials this is immediate from Lemma \ref{lem:jacobi-endpoint-peelpass} with $a=b=0$, giving $P_j(1)=\binom{j}{j}=1$. The Chebyshev value also follows from $T_j(1)=1$, cf. DLMF Table 18.6.1 \cite{NIST:DLMF}.
\end{proof}
This endpoint contraction is exact in the following sense: given the solved block coefficients, it recovers the final-slice values with no quadrature or interpolation error. The passed slice, if truncated, still carries the block's own discretisation error, which the next block inherits as its initial data. We quantify this accumulation in Section \ref{sec:analysis}. Similar results hold for the coefficients of time derivatives, which represent one option for passing initial data for equations of higher order in time such as the wave equation.

\medskip
\begin{lemma}[Jacobi endpoint derivative weights]\label{lem:jacobi-derivative-weights}
    Let $\tilde{P}_j^{(a,b),n}(t)=P_j^{(a,b)}(m_n(t))$ be the shifted Jacobi polynomial on the block $[(n-1)\Delta t,n\Delta t]$, with $a,b>-1$ and $m_n(t)=\tfrac{2t-(2n-1)\Delta t}{\Delta t}$. For any derivative order $r\ge 0$,
    \begin{align*}
        \partial_t^r \tilde{P}_j^{(a,b),n}(n\Delta t)
        =
        \Delta t^{-r}(j+a+b+1)_r \binom{j+a}{j-r},
    \end{align*}
    with the convention that the expression is zero when $j<r$.
    Consequently, if
    \begin{align*}
        u(t,\mathbf{x})=\sum_{j,\mathbf{k}}u_{j,\mathbf{k}}\tilde{P}_j^{(a,b),n}(t)q_\mathbf{k}(\mathbf{x}),
    \end{align*}
    then the $r$-th time derivative on the final slice is obtained by the coefficient contraction
    \begin{align*}
        \partial_t^r u(n\Delta t,\mathbf{x})
        =
        \sum_{\mathbf{k}}
        \left(
        \sum_j
        \Delta t^{-r}(j+a+b+1)_r \binom{j+a}{j-r} u_{j,\mathbf{k}}
        \right)
        q_\mathbf{k}(\mathbf{x}).
    \end{align*}
\end{lemma}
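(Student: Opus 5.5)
The plan is to reduce everything to the reference interval and combine two classical facts: the derivative formula for Jacobi polynomials and the endpoint value already used in Lemma \ref{lem:jacobi-endpoint-peelpass}. First, the chain rule applied to $\tilde{P}_j^{(a,b),n}(t)=P_j^{(a,b)}(m_n(t))$ gives
\begin{align*}
\partial_t^r \tilde{P}_j^{(a,b),n}(t) = \left(\tfrac{2}{\Delta t}\right)^r \bigl(P_j^{(a,b)}\bigr)^{(r)}(m_n(t)),
\end{align*}
because $m_n$ is affine with $m_n'(t)=2/\Delta t$. Evaluating at $t=n\Delta t$, where $m_n(n\Delta t)=1$, reduces the claim to computing $\bigl(P_j^{(a,b)}\bigr)^{(r)}(1)$.

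Next, I would iterate the standard derivative identity (DLMF 18.9.15)
\begin{align*}
\tfrac{d}{dx}P_j^{(a,b)}(x)=\tfrac12(j+a+b+1)P_{j-1}^{(a+1,b+1)}(x).
\end{align*}
After $r$ applications this becomes
\begin{align*}
\bigl(P_j^{(a,b)}\bigr)^{(r)}(x) = 2^{-r}(j+a+b+1)_r\,P_{j-r}^{(a+r,b+r)}(x),
\end{align*}
which follows by induction on $r$. At each step the degree drops by one while both parameters rise by one, so the quantity $j+a+b+1$ for the new polynomial is $(j-1)+(a+1)+(b+1)+1=j+a+b+2$. The successive factors are therefore $(j+a+b+1)(j+a+b+2)\cdots$, which multiply to the rising factorial $(j+a+b+1)_r$. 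For $j<r$ the polynomial has degree below $r$, so the derivative vanishes, matching the stated convention.

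To finish, I evaluate at $x=1$ using the endpoint value from the proof of Lemma \ref{lem:jacobi-endpoint-peelpass} with parameter $a+r$:
\begin{align*}
P_{j-r}^{(a+r,b+r)}(1)=\binom{j-r+a+r}{j-r}=\binom{j+a}{j-r}.
\end{align*}
The powers of two cancel, $(2/\Delta t)^r\cdot 2^{-r}=\Delta t^{-r}$, which yields the stated weight. The coefficient contraction then follows exactly as in Lemma \ref{lem:jacobi-endpoint-peelpass}: differentiate the finite expansion term by term in $t$ and collect the coefficients of each $q_\mathbf{k}$.

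The only real care point is bookkeeping in the induction, namely verifying that the Pochhammer factor accumulates correctly and that the parameters remain $>-1$ (they only increase, so they do). I would check the result against the Legendre case $r=1$, where it should reproduce $P_j'(1)=j(j+1)/2$ after accounting for the $2/\Delta t$ scaling: indeed $(j+1)\binom{j}{j-1}=j(j+1)$, which times $\Delta t^{-1}$ equals $(2/\Delta t)\cdot j(j+1)/2$.
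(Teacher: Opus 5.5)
Your proposal is correct and follows essentially the same route as the paper's proof. Both apply the chain-rule factor $(2/\Delta t)^r$, iterate DLMF 18.9.15 to get $2^{-r}(j+a+b+1)_r P_{j-r}^{(a+r,b+r)}$, and evaluate at $\tau=1$ via $\binom{j+a}{j-r}$. Your explicit induction bookkeeping and the Legendre $r=1$ check are useful additions, but they do not change the argument.
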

\begin{proof}
    The affine map has derivative $m_n'(t)=2/\Delta t$. Applying the Jacobi derivative identity DLMF 18.9.15 \cite{NIST:DLMF} repeatedly in the reference coordinate $\tau$ gives
    \begin{align*}
        \frac{d^r}{d\tau^r}P_j^{(a,b)}(\tau)
        =
        2^{-r}(j+a+b+1)_r P_{j-r}^{(a+r,b+r)}(\tau),
    \end{align*}
    with the derivative equal to zero for $j<r$. Evaluating at $\tau=1$ and using $P_{j-r}^{(a+r,b+r)}(1)=\binom{j+a}{j-r}$ gives the stated weight after multiplication by the chain-rule factor $(2/\Delta t)^r$.
\end{proof}

\medskip
\begin{corollary}[Legendre and Chebyshev endpoint derivative weights]\label{cor:endpoint-derivative-weights}
    Let $\tilde P_j^n(t)=P_j(m_n(t))$ and $\tilde T_j^n(t)=T_j(m_n(t))$ be the shifted Legendre and Chebyshev-$T$ bases on the block $[(n-1)\Delta t,n\Delta t]$. For Legendre,
    \begin{align*}
        \frac{d^r}{dt^r}\tilde P_j^n(n\Delta t)
        =
        \Delta t^{-r}\frac{(j+r)!}{r!(j-r)!},
    \end{align*}
    with the convention that the expression is zero when $j<r$.
    For Chebyshev-$T$, for $r\ge 1$, the weights are given in terms of the odd double factorial by
    \begin{align*}
        \frac{d^r}{dt^r}\tilde T_j^n(n\Delta t)
        =
        \left(\frac{2}{\Delta t}\right)^r
        \frac{j^2\prod_{m=1}^{r-1}(j^2-m^2)}{(2r-1)!!},
    \end{align*}
    again with the convention that it is zero when $j<r$. In particular, the first-derivative weights are $j(j+1)/\Delta t$ for Legendre and $2j^2/\Delta t$ for Chebyshev-$T$.
\end{corollary}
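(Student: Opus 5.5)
For the Legendre part I would specialise Lemma \ref{lem:jacobi-derivative-weights} to $a=b=0$. The weight is then $\Delta t^{-r}(j+1)_r\binom{j}{j-r}$. Since $(j+1)_r=(j+r)!/j!$ and $\binom{j}{j-r}=j!/(r!(j-r)!)$, the product collapses to $(j+r)!/(r!(j-r)!)$, which is the claim. For $j<r$ the binomial vanishes, matching the stated convention. Setting $r=1$ gives $(j+1)!/(j-1)!=j(j+1)$, so the first-derivative weight is $j(j+1)/\Delta t$.

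For Chebyshev-$T$ I would not go through Jacobi with $a=b=-\tfrac12$. The normalisation $T_j=P_j^{(-1/2,-1/2)}/P_j^{(-1/2,-1/2)}(1)$ introduces extra Pochhammer ratios that are tedious to simplify. Instead I would work directly with the reference-coordinate derivatives at $\tau=1$ and then apply the chain rule, exactly as in the proof of Lemma \ref{lem:jacobi-derivative-weights}: with $m_n'=2/\Delta t$ we have $\frac{d^r}{dt^r}\tilde T_j^n(n\Delta t)=(2/\Delta t)^r T_j^{(r)}(1)$. It therefore remains to show
\[
T_j^{(r)}(1)=\frac{j^2\prod_{m=1}^{r-1}(j^2-m^2)}{(2r-1)!!}.
\]
To do this I would use the Chebyshev differential equation $(1-\tau^2)T_j''-\tau T_j'+j^2T_j=0$ and differentiate it $k$ times by Leibniz. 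This gives
\[
(1-\tau^2)T_j^{(k+2)}-(2k+1)\tau T_j^{(k+1)}+(j^2-k^2)T_j^{(k)}=0.
\]
At $\tau=1$ the first term drops, leaving the recursion $T_j^{(k+1)}(1)=\frac{j^2-k^2}{2k+1}T_j^{(k)}(1)$. Starting from $T_j(1)=1$ and iterating for $k=0,\dots,r-1$ gives the product $\prod_{k=0}^{r-1}(j^2-k^2)/(2k+1)$. The $k=0$ factor is $j^2$ and the denominators multiply to $(2r-1)!!$, which is the claimed formula.

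The zero convention for $j<r$ follows automatically, because the factor $k=j$ satisfies $j\le r-1$ and appears in the product. It also agrees with $T_j$ having degree $j<r$. Setting $r=1$ gives $(2/\Delta t)\,j^2$, i.e. $2j^2/\Delta t$. The only point requiring care is the Leibniz differentiation of the ODE, specifically checking the coefficients $2k+1$ and $j^2-k^2$. I would verify these by induction on $k$, noting that $\frac{d}{d\tau}(1-\tau^2)=-2\tau$ contributes $-2\tau$ per differentiation and that the $-\tau T'$ term contributes $-1$ per differentiation.
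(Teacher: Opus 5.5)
Your Legendre argument is the paper's: specialise Lemma~\ref{lem:jacobi-derivative-weights} to $a=b=0$ and collapse $(j+1)_r\binom{j}{j-r}$ to $(j+r)!/(r!(j-r)!)$.

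For Chebyshev-$T$ your proof is correct but takes a genuinely different route. The paper stays inside the Jacobi framework. It writes $T_j=P_j^{(-1/2,-1/2)}/P_j^{(-1/2,-1/2)}(1)$ and applies the lemma with $a=b=-1/2$, so that $(j+a+b+1)_r=(j)_r$. It then simplifies $\binom{j-1/2}{j-r}/\binom{j-1/2}{j}=\frac{j!}{(j-r)!}\,\frac{\Gamma(1/2)}{\Gamma(r+1/2)}$. Here $\Gamma(r+1/2)/\Gamma(1/2)=(2r-1)!!/2^r$ supplies the double factorial, and $(j)_r\,j!/(j-r)!$ pairs into $j^2\prod_{m=1}^{r-1}(j^2-m^2)$.

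You instead differentiate the Chebyshev equation. Your Leibniz coefficients $-(2k+1)$ and $j^2-k^2$ are right, so the endpoint recursion $T_j^{(k+1)}(1)=\frac{j^2-k^2}{2k+1}T_j^{(k)}(1)$ holds. Iterating it gives the formula, the vanishing for $j<r$, and the $r=1$ weight.

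The paper's route buys uniformity: every weight flows from one DLMF derivative identity plus the endpoint value. The cost is a Gamma-function cancellation that the paper only sketches. Your route is self-contained, needing only the ODE and $T_j(1)=1$. It also makes each factor visible, including $(2r-1)!!=\prod_{k=0}^{r-1}(2k+1)$.

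Your method also generalises. The same manipulation on the Jacobi equation gives $y^{(k+1)}(1)=\frac{(j-k)(j+k+a+b+1)}{2(k+a+1)}\,y^{(k)}(1)$ for $a>-1$. Starting from $\binom{j+a}{j}$, this reproduces Lemma~\ref{lem:jacobi-derivative-weights} itself without DLMF 18.9.15.
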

\begin{proof}
    The Legendre formula is Lemma \ref{lem:jacobi-derivative-weights} with $a=b=0$:
    \[
        \Delta t^{-r}(j+1)_r\binom{j}{j-r}
        =
        \Delta t^{-r}\frac{(j+r)!}{r!(j-r)!}.
    \]
    For Chebyshev-$T$, use the Jacobi representation
    \[
        T_j(x)=\frac{P_j^{(-1/2,-1/2)}(x)}{P_j^{(-1/2,-1/2)}(1)}.
    \]
    Applying Lemma \ref{lem:jacobi-derivative-weights} with $a=b=-1/2$ and dividing by $P_j^{(-1/2,-1/2)}(1)$ gives
    \[
        T_j^{(r)}(1)
        =
        2^{-r}(j)_r
        \frac{\binom{j-1/2}{j-r}}{\binom{j-1/2}{j}}
        =
        \frac{j^2\prod_{m=1}^{r-1}(j^2-m^2)}{(2r-1)!!}, \qquad r\ge 1.
    \]
    Here $(2r-1)!!=(2r-1)(2r-3)\cdots 3\cdot 1$ is the odd double factorial which appears after cancelling the half-integer binomial ratio into paired factors.
    Multiplying by the affine block factor $(2/\Delta t)^r$ gives the stated general weight.
\end{proof}
Crucially, both the value and derivative versions of peel and pass are exact endpoint contractions of the stored coefficient tensor along the single time index, so no re-expansion or quadrature is incurred at a block interface. Derivative passing nevertheless has weights that grow with degree even for Legendre and Chebyshev-$T$, and so can amplify high-degree coefficient error, especially in floating point arithmetic. For this reason, the numerical experiments below favour rewriting higher-order-in-time equations as first-order systems and passing values of the state components. 

The following cost estimate records the value-passing case used by the first-order formulations below.

\medskip
\begin{lemma}[Cost of peel-and-pass]\label{lem:peelcost}
Let a single space-time block have $N_t$ time coefficients and $N_x$ spatial coefficients, and index its stored real coefficients as $C_{j,k}$, where $0 \le j < N_t$ is the time index and $1 \le k \le N_x$ enumerates the retained spatial coefficient vector. Suppose that the time basis is either Legendre or Chebyshev-$T$, mapped affinely from the block to $[-1,1]$. 

Then passing the final value slice costs $\mathcal{O}(N_tN_x)$ floating-point operations and $\mathcal{O}(N_x)$ additional storage for the passed spatial coefficient vector. For fixed block resolution this is independent of final time and number of previous blocks.
\end{lemma}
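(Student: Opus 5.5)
The plan is to reduce the statement to Corollary \ref{cor:legendre} and then count operations. By Lemma \ref{lem:jacobi-endpoint-peelpass} and Corollary \ref{cor:legendre}, the final slice of the block has spatial coefficients
\begin{align*}
    v_k = \sum_{j=0}^{N_t-1} C_{j,k} \tilde p_j(n\Delta t) = \sum_{j=0}^{N_t-1} C_{j,k}, \qquad 1\le k\le N_x,
\end{align*}
because the affinely mapped basis satisfies $P_j(1)=T_j(1)=1$. This holds regardless of the block index $n$ or the step $\Delta t$. So passing the value slice means computing the vector $v=(v_k)_{k=1}^{N_x}$, that is, contracting the stored coefficient array against the all-ones vector along the time index.

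The next step is to count the work and storage. Each $v_k$ is a sum of $N_t$ reals, costing $N_t-1$ additions, so the full contraction costs $N_x(N_t-1)=\mathcal{O}(N_tN_x)$ floating-point operations. No multiplications are needed, since the weights are identically one. The output is a length-$N_x$ vector of accumulators, which is the $\mathcal{O}(N_x)$ additional storage. The summation can be done as a single pass over $C$, accumulating into $v$, with no temporaries beyond $v$. If the solver stores the coefficients in a different linear order, such as time-fastest versus space-fastest, this only permutes the index $(j,k)$ and leaves both counts unchanged.

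Finally, I will argue independence from the final time and from the number of previous blocks. Both bounds depend only on $(N_t,N_x)$. The weights do not depend on $n$ or $\Delta t$, by the last assertion of Lemma \ref{lem:jacobi-endpoint-peelpass}, and in the Chebyshev case by the same affine argument. The contraction also uses only the current block's coefficients, with no data from earlier blocks.

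The only potential subtlety is the requirement that the passed vector be expressed in the spatial basis $\{q_\mathbf{k}\}$ the next block expects for its initial row. I would note that both blocks use the same spatial basis, so no conversion is required, and that any fixed spatial basis conversion would be a block-independent cost anyway. I do not expect a genuine obstacle; the content is essentially the observation that the weights are all ones.
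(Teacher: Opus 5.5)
Your proposal is correct and follows essentially the same route as the paper's proof: use the identically-one endpoint weights from Corollary~\ref{cor:legendre}, and observe that passing the slice is one length-$N_t$ contraction per spatial index, producing only the vector in $\mathbb{R}^{N_x}$. Your explicit count of $N_x(N_t-1)$ additions and your remark that the weights are independent of $n$ and $\Delta t$ just spell out what the paper calls immediate.
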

\begin{proof}
Immediate from the formula
\[
    c_k^+ = \sum_{j=0}^{N_t-1} w_j C_{j,k}, \qquad 1 \le k \le N_x,
\]
where the endpoint weights $w_j$ are those of Corollary \ref{cor:legendre}. This performs one length-$N_t$ contraction for each of the $N_x$ spatial indices and stores only the resulting vector $c^+\in\mathbb{R}^{N_x}$.
\end{proof}
Passing a fixed number of additional endpoint traces, for example derivatives in a scalar higher-order formulation, only multiplies the work and storage by a fixed constant.
In floating point arithmetic, a defensive implementation of these endpoint contractions can sum from high to low degree, since for analytic solutions the spectral coefficients typically decay with degree.

\begin{remark}[Spatial composition and parallelism]\label{rem:spatialsem}
We treat the spatial domain $\Omega$ as a single spectral element in our experiments, but nothing in the construction requires this. Peel and pass acts only along the time index and is agnostic to how the spatial domain is discretised, so it composes directly with spatial spectral elements provided the spatial elements and their inter-element continuity conditions are assembled consistently within each space-time block. The endpoint contractions also \emph{parallelise} naturally in the spatial direction. At the coefficient level, the contractions for different spatial indices are independent. On $p$ workers and with balanced distribution over spatial indices, the value-passing parallel time is $\mathcal{O}(N_t\lceil N_x/p\rceil)$ up to scheduling and memory-bandwidth overheads. In a spatial element method, the same observation regarding parallelism applies element by element, with the passed interface data formed locally before the next block is assembled.
\end{remark}
\section{Memory, Cost, and Error Analysis}\label{sec:analysis-umbrella}
\subsection{Memory Requirements}\label{sec:memory}
We now make the memory advantage more precise. By \emph{resident memory} we mean the retained solve state: the spectral coefficients, the passed interface data, work arrays such as right-hand sides, and the retained factorisation used by the linear solver. Thus the memory count has a coefficient part and an operator factorisation part. The latter is usually dominant, and its scaling is governed by the fill-in of the factorisation.

\medskip
\begin{definition}[Coefficient ratio]\label{def:memory-ratios}
For a given global/block comparison, write $N_g$ and $N_b$ for the global and one-block unknown counts and set
\[
    R = \frac{N_g}{N_b},
\]
the factor by which the global solve carries more coefficients than a single block.\end{definition} 
For temporally resolved oscillatory solutions with fixed physical frequency content the global polynomial degree required on a single interval grows approximately linearly with the time horizon, so at matched temporal resolution density $R\approx L$ for $L$ uniform blocks. Other coefficient-growth laws give similar bookkeeping rules with different ratios.

\medskip
\begin{lemma}[Resident-memory scaling]\label{lem:memory}
Let the spatial discretisation use $N_x$ coefficients and compare a single global solve on $[0,T]$ with a uniform blocked solve. Let $M_g$ and $M_b$ denote the corresponding global and one-block resident-memory counts, measured in retained floating-point scalars as described above. With $R=N_g/N_b$ as in Definition \ref{def:memory-ratios}, the ratio $M_g/M_b$ has the following model scalings:
\begin{itemize}
    \item $M_g/M_b\sim R$ when only coefficient and linear work-array memory are counted;
    \item $M_g/M_b\sim R$ when a reusable retained factorisation has fixed bandwidth $p$;
    \item $M_g/M_b\sim R^2$ when a dense retained factorisation dominates the memory count.
\end{itemize}
\end{lemma}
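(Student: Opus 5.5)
The argument is pure bookkeeping. We write each resident-memory count as a coefficient part plus a factorisation part and read off the ratio in each of the three regimes.

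Fix the spatial discretisation with $N_x$ coefficients. The global and one-block unknown counts are then $N_g = N_t^{g} N_x$ and $N_b = N_t^{b} N_x$ (summed over components for first-order systems, which only changes constants). Hence $R = N_g/N_b$.

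\emph{Coefficient and linear work-array memory.} For a single solve this memory is $c_1 N + c_0$, where $N$ is the unknown count of that solve. The blocked solve additionally keeps the passed vector, which by Lemma \ref{lem:peelcost} costs $\mathcal{O}(N_x)$ storage. Since $N_x \le N_b$, this is absorbed into the constant, giving $M_b = \Theta(N_b)$ and $M_g = \Theta(N_g)$. Therefore $M_g/M_b \sim R$.

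\emph{Fixed-bandwidth factorisation.} We use the standard fact that a banded LU factorisation with bandwidth $p$ of an $N\times N$ matrix, with or without partial pivoting, has fill confined to bandwidth at most $2p$. It therefore stores $\mathcal{O}(pN)$ scalars. For a border of $\mathcal{O}(N_x)$ initial and boundary rows, we either count the border as part of the bandwidth after a suitable ordering, or note that it contributes at most $\mathcal{O}(N_x N)$. Either way this is absorbed into the fixed-$p$ model as long as $p$ is the same for the global and block operators.

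Here lies the main obstacle: the step requires checking that $p$ does not grow with $N_t$. For the ultraspherical operators $S_x\otimes D_t - D_x^{(2)}\otimes S_t$ the temporal factors are banded independently of degree. The statement's hypothesis "fixed bandwidth $p$" already encodes this, so I would simply state that the same $p$ bounds both operators. Adding the coefficient part gives $M = \Theta((p+1)N)$ for both solves, and hence the ratio $\sim R$. The blocked solve reuses one factorisation across blocks when $\Delta t$ is uniform and the operator is time-invariant. So only one factorisation is ever resident, not $L$ of them, and this is where "reusable" enters.

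\emph{Dense factorisation.} A dense LU factorisation stores $N^2$ scalars, which dominates the $\mathcal{O}(N)$ coefficient memory once $N$ is large. Then $M_g/M_b = (N_g^2 + \mathcal{O}(N_g))/(N_b^2 + \mathcal{O}(N_b)) \sim R^2$.

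Finally, I would remark that "$\sim$" is meant as asymptotic equivalence up to constants in the regime $N_b\to\infty$ with $R$ fixed or growing. For oscillatory problems, the preceding discussion gives $R\approx L$.
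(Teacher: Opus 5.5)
Your proposal is correct and follows essentially the same bookkeeping as the paper: write $N_g=N_t^{g}N_x$ and $N_b=N_t^{b}N_x$, note that coefficient and work-array memory is linear in the unknown count, and take retained-factor memory $\sim pN$ or $\sim N^2$ to obtain the ratios $R$ and $R^2$. The one small difference is the passed vector: the paper notes that the peeled slice overwrites the next block's initial-data vector, so it adds no term at all, whereas you absorb an $\mathcal{O}(N_x)$ term using $N_x\le N_b$; your remarks on banded LU fill and border rows go beyond what is needed, since fixed bandwidth $p$ is a hypothesis of the model.
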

\begin{proof}
The tensor-product unknown counts are $N_g=N_t(T)N_x$ and $N_b=N_t^{(b)}N_x$, where $N_t^{(b)}$ is the one-block time coefficient count, so $R=N_t(T)/N_t^{(b)}$. Coefficient arrays, right-hand sides and state work arrays are linear in the unknown count, so their ratio is $R$, up to fixed-size lower-order work arrays. One peeled and passed state overwrites the initial-data vector for the next block, so it does not require an additional resident vector in this count.

Let $S(N_g)$ and $S(N_b)$ denote the retained factor memory for the global and one-block solves. The counted resident memory is the linear coefficient/work-array part plus this factor memory. For a fixed-bandwidth factorisation, $S(N_g)\sim pN_g$ and $S(N_b)\sim pN_b$ with bandwidth $p$ independent of the problem size, so
\[
    \frac{M_g}{M_b}
    \sim
    \frac{N_g+pN_g}{N_b+pN_b}
    = R .
\]
For a dense factorisation, $S(N_g)\sim N_g^2$ and $S(N_b)\sim N_b^2$, giving
\[
    \frac{M_g}{M_b}
    \sim
    \frac{N_g+N_g^2}{N_b+N_b^2}
    \approx R^2
\]
when the quadratic factor term dominates.
\end{proof}
The fixed-bandwidth and dense-factor ratios above assume comparable fill-in behaviour between the global and block factorisations. If the large and small factors fill by substantially different fractions, finite-size measured ratios can move outside these model references.
\subsection{Solve Cost}\label{sec:cost}
The memory comparison is the main point of the construction, but the cost of the local solves is also important. When the left-hand-side block operator is the same on every uniform block, the block operator can be assembled and factorised once, then reused for each new right-hand side. Note that peel and pass itself does not care how the block operator was assembled: nonautonomous coefficients affect factorisation reuse, not the endpoint contraction, but they can affect trade-offs in solve cost.

\medskip
\begin{lemma}[Reusable-factorisation cost model]\label{lem:solve-cost}
    Suppose that one local space-time block has $N_b$ unknown coefficients, and assume that the left-hand-side block operator is reusable on every uniform block. Let $R=N_g/N_b$ be the coefficient ratio from Definition \ref{def:memory-ratios}. Let $F(N)$ denote the one-time cost of factorising an $N$-unknown operator, and let $A(N)$ denote the cost of applying that factorisation to one right-hand side. Compare $L$ uniform block solves, reusing a single block factorisation, against one global solve with $N_g=RN_b$ unknowns. The respective costs are
    \begin{align*}
        C_{\mathrm{block}}(N_b,L) = F(N_b) + L A(N_b),
        \qquad
        C_{\mathrm{global}}(N_b,R) = F(RN_b) + A(RN_b).
    \end{align*}
    If the factorisation and application costs are $F(N)=c_F N^\alpha$ and $A(N)=c_A N^\beta$, with $c_F,c_A>0$ and $\alpha\ge\beta$, the cost ratio is
    \begin{align*}
        \frac{C_{\mathrm{global}}(N_b,R)}{C_{\mathrm{block}}(N_b,L)}
        =
        \frac{c_F R^\alpha N_b^\alpha + c_A R^\beta N_b^\beta}
             {c_F N_b^\alpha + c_A L N_b^\beta}.
    \end{align*}
    When both global and block systems are refined with fixed coefficient ratio $R$ and fixed block count $L$, and $\alpha>\beta$, this ratio tends to $R^\alpha$.
\end{lemma}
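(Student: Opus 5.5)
The argument is direct bookkeeping under the stated model. I will count the block strategy first, then the global strategy, then form the ratio and take the limit.

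\emph{Block strategy.} Because the left-hand-side operator is the same on every uniform block, it is assembled and factorised once, at cost $F(N_b)$. The reuse hypothesis is exactly what removes any per-block factorisation. Each of the $L$ blocks then needs one application of the stored factorisation to a new right-hand side, which costs $A(N_b)$ per block. The right-hand side for block $n$ is built from the data passed by block $n-1$. By Lemma~\ref{lem:peelcost}, that peel-and-pass step costs $\mathcal{O}(N_tN_x)=\mathcal{O}(N_b)$ and does not depend on the factorisation. I therefore either absorb it into $A(N_b)$, since $\beta\ge 1$ for any application that reads all $N_b$ coefficients, or note that it is excluded from the model, as the statement counts only factorisation and solve costs. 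This gives
\[
    C_{\mathrm{block}}=F(N_b)+L\,A(N_b).
\]

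\emph{Global strategy.} The global solve has $N_g=RN_b$ unknowns by Definition~\ref{def:memory-ratios}. It is factorised once and applied to a single right-hand side, so
\[
    C_{\mathrm{global}}=F(RN_b)+A(RN_b).
\]

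\emph{Ratio.} Substituting the power laws gives $F(RN_b)=c_FR^\alpha N_b^\alpha$ and $A(RN_b)=c_AR^\beta N_b^\beta$. Forming the quotient of the two costs yields the displayed ratio.

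\emph{Limit.} Hold $R$ and $L$ fixed and let $N_b\to\infty$. Divide numerator and denominator by $c_FN_b^\alpha$:
\[
    \frac{R^\alpha+(c_A/c_F)R^\beta N_b^{\beta-\alpha}}{1+(c_A/c_F)L\,N_b^{\beta-\alpha}}.
\]
Since $\alpha>\beta$, we have $N_b^{\beta-\alpha}\to 0$. The correction terms vanish because $R$, $L$, $c_A$ and $c_F$ are constants, and the ratio tends to $R^\alpha$.

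The only real subtlety is interpretive rather than computational. One has to state clearly that the reuse hypothesis means a single $F(N_b)$, and that the interface contraction is either negligible or outside the model. Everything else is algebra.
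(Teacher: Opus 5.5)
Your proposal is correct and follows essentially the same route as the paper: you read the two cost identities directly off the definitions of $F$ and $A$, substitute the power laws, and obtain the limit $R^\alpha$ because the $N_b^\alpha$ terms dominate when $\alpha>\beta$. Your aside about absorbing the peel-and-pass contraction into $A(N_b)$ is not needed, and $\beta\ge 1$ is not among the hypotheses; the paper treats the cost identities as the model itself, so simply excluding the interface cost, as in your second option, is the reading that matches.
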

\begin{proof}
    The cost identities follow immediately from the definitions of $F$ and $A$. Substituting $F(N)=c_F N^\alpha$ and $A(N)=c_A N^\beta$ gives the displayed ratio. Under proportional refinement with fixed $R$ and $L$, the $N_b^\alpha$ terms dominate numerator and denominator when $\alpha>\beta$, so the ratio tends to $c_F R^\alpha N_b^\alpha/(c_F N_b^\alpha)=R^\alpha$.
\end{proof}
The displayed ratio is $C_{\mathrm{global}}/C_{\mathrm{block}}$, so large values favour blocking over the global approach. Blocking replaces one factorisation at the global size by one reusable factorisation at the block size plus $L$ factor applications. Which timing ratio this produces depends on the solver regime and on the relation between the coefficient ratio $R$ and the number of blocks $L$. In the limiting fixed-bandwidth model, with bandwidth $p$ independent of $N_b$, taking $F(N)=c_Fp^2N$ and $A(N)=c_ApN$ gives
\begin{align*}
    \frac{C_{\mathrm{global}}}{C_{\mathrm{block}}}
    =
    \frac{R(c_Fp+c_A)}{c_Fp+c_A L}.
\end{align*}
This is the conservative lower-fill regime: if $R\approx L$ it lies between $1$ and $R$, but if $R$ is much smaller than $L$ repeated applications can dominate and reduce the speedup. For a dense factorisation, by contrast, the model $F(N)=c_F N^3$ and $A(N)=c_A N^2$ tends to $R^3$ under proportional refinement with fixed $R$ and $L$. The sparse-direct LU factorisations we explore in the numerical experiments section sit between these extremes: their effective powers and constants are set by realised fill-in, so the timing ratios are expected to track the sparse factor sizes and hence be comparable to the resident-memory ratios, up to ordering, cache, and implementation effects.

\subsection{Error Propagation Estimate}\label{sec:analysis}
Restricting an analytic solution from $[0,T]$ to a shorter block of length $\Delta t$ moves singularities farther away in the reference coordinate, enlarging the admissible Bernstein ellipse and improving the geometric rate of polynomial approximation. The bound below records this effect while accounting for per-block stability.

\medskip
\begin{definition}[Per-block stability factor]\label{def:block-stability}
    In a fixed norm, say that a blocked solve has per-block stability factor $K\ge 0$ if, on every block, the part of the outgoing final-slice error caused by the incoming initial-slice error is bounded by $K$ times that incoming error. Equivalently, for linear homogeneous block error-propagation maps $E_n$ from the incoming state error to the outgoing passed-slice error, one may take any $K$ satisfying
    \begin{align*}
        \|E_n \eta\| \le K\|\eta\| \qquad \text{for every block } n
    \end{align*}
    and every admissible incoming state error $\eta$.
    For a contractive problem such as heat flow, the continuous evolution has $K\le 1$ in the contractive norm. A discrete block solve likewise has $K\le 1$ when its block error-propagation map is non-expansive in that norm. For a quasi-contractive block evolution, $K\le e^{\omega\Delta t}=1+\mathcal{O}(\Delta t)$ for some $\omega\in\mathbb{R}$.
\end{definition}

\medskip
\begin{theorem}[Conditional error propagation for peel and pass]\label{thm:error}
    Consider a linear, well-posed evolution problem $\partial_t u = \mathcal{L}u$ on $\Omega$, measured in a fixed norm. For a second-order-in-time problem, rewrite the equation as a first-order system with state $U=(u,u_t)$. Advance the solution over $[0,T]$ in $L=T/\Delta t$ uniform blocks of size $\Delta t$, where $L \in \mathbb{Z}$, and use $N_t$ Legendre coefficients in time on each block. Let $e_0$ be the error in the initial state.

    Further, assume the following:
    \begin{enumerate}
        \item The exact solution is analytic in a complex neighbourhood of $[0,T]$ whose boundary is at least a distance $d>0$ from the real time interval.
        \item Peel and pass is exact, as in Corollary \ref{cor:legendre}, so that the block interface contributes no additional error.
        \item With exact initial data, the per-block final-slice discretisation error is at most
        \begin{align*}
            C\varrho_{\Delta t}^{-N_t}+\varepsilon_x,
        \end{align*}
        where $\varepsilon_x$ bounds the spatial truncation error on one block.
        \item In the chosen norm, the per-block space-time solve has a uniform stability factor $K \ge 0$ in the sense of Definition \ref{def:block-stability}.
    \end{enumerate}

    Then the error of the final passed slice obeys
    \begin{align*}
        \|u_h(T,\cdot) - u(T,\cdot)\| \quad\le\quad K^L\, \|e_0\| \quad+\quad \frac{K^L-1}{K-1}\Bigl(C\,\varrho_{\Delta t}^{-N_t} + \varepsilon_x\Bigr),
    \end{align*}
    where the fraction is read as $L$ when $K=1$. The constant $C$ depends on bounds for the analytic continuation of the exact solution in the chosen complex neighbourhood, but is independent of $N_t$, $\Delta t$, and $L$. The parameter $\varrho_{\Delta t}$ may be chosen to satisfy
    \begin{align*}
        1<\varrho_{\Delta t}<\delta + \sqrt{1+\delta^2}, \qquad \delta = \frac{2d}{\Delta t},
    \end{align*}
    and is then an admissible Bernstein-ellipse parameter for a function analytic at reference-coordinate distance at least $\delta$ from $[-1,1]$. More generally, one may use any admissible Bernstein parameter for the solution restricted to a single block.
\end{theorem}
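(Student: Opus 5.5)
The plan is a discrete Gronwall-type recursion over blocks, followed by a separate justification of the admissible Bernstein parameter $\varrho_{\Delta t}$.

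\textbf{Error recursion.} Let $e_n = u_h(n\Delta t,\cdot)-u(n\Delta t,\cdot)$ denote the passed-slice error after block $n$. By assumption 2 (Corollary \ref{cor:legendre}), this slice is exactly what the next block receives as initial data, so no interface term appears. Because the problem and the block solve are linear, the outgoing error of block $n$ splits into two parts.
\begin{itemize}
\item The first part is the error the block would commit with exact initial data $u((n-1)\Delta t,\cdot)$. By assumption 3 it is bounded by $\epsilon := C\varrho_{\Delta t}^{-N_t}+\varepsilon_x$.
\item The second part is the block error-propagation map $E_n$ applied to the incoming error $e_{n-1}$. By assumption 4 and Definition \ref{def:block-stability} it is bounded by $K\|e_{n-1}\|$.
\end{itemize}
The triangle inequality then gives $\|e_n\|\le K\|e_{n-1}\|+\epsilon$. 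Unrolling this from $e_0$ yields
\begin{align*}
\|e_L\|\le K^L\|e_0\|+\epsilon\sum_{i=0}^{L-1}K^i .
\end{align*}
The geometric sum equals $(K^L-1)/(K-1)$, which is read as $L$ when $K=1$. This is the claimed bound.

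For a second-order problem the same argument applies verbatim to the state $U=(u,u_t)$. The passed quantities are the values of both components, and the norm is a product norm on the state.

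\textbf{Bernstein parameter.} Next I would justify the admissible $\varrho_{\Delta t}$ and the uniformity of $C$. The affine map $m_n$ scales distances by $2/\Delta t$. Hence analyticity at distance $d$ from $[0,T]$ gives analyticity of each block restriction within reference distance $\delta=2d/\Delta t$ of $[-1,1]$. The Bernstein ellipse $E_\rho$ has semi-minor axis $(\rho-\rho^{-1})/2$. Its distance to $[-1,1]$ is at most this semi-minor axis, attained at the midpoint. So I need every point of $E_\rho$ to lie within distance $\delta$ of $[-1,1]$. The binding case is the end region, where the semi-major axis $(\rho+\rho^{-1})/2$ must satisfy $(\rho+\rho^{-1})/2-1\le\delta$. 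Comparing $\delta+\sqrt{1+\delta^2}$ with the root of $(\rho-\rho^{-1})/2=\delta$ shows that the stated range is exactly that condition applied to the semi-minor axis. I would therefore verify that the ellipse with $(\rho-\rho^{-1})/2<\delta$ also fits near the endpoints, or else adjust the argument to use the ellipse interior inside the $\delta$-neighbourhood (a stadium). \emph{I expect this geometric step to be the main obstacle.} The endpoints are where the ellipse reaches farthest relative to a stadium, since the semi-major excess $(\rho+\rho^{-1})/2-1$ can exceed $\delta$ for small $\delta$. If so, I would restate it as: ``$\varrho_{\Delta t}$ any admissible parameter for the block restriction,'' which the theorem explicitly allows.

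\textbf{Uniformity of $C$.} The standard Legendre/Chebyshev coefficient bound $|u_j|\lesssim M\rho^{-j}$ uses $M=\sup_{E_\rho}|u|$. That supremum is bounded by the supremum of $u$ on the fixed complex neighbourhood, independent of $n$, $\Delta t$ and $N_t$. Strictly, $C$ also depends on $\rho$ through the factor $(\rho-1)^{-1}$ in the tail sum. I would absorb this by fixing $\varrho_{\Delta t}$ strictly inside its admissible range.

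The transfer from coefficient decay to the final-slice error of the discrete solve is part of assumption 3. So the only genuine work is the recursion and this geometric/analytic bookkeeping.
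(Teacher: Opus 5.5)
Your block recursion is the paper's proof. You split the outgoing error by linearity into an exact-data part, bounded by assumption 3, and a propagated part, bounded by $K\|e_{n-1}\|$. That gives the recurrence $\|e_n\|\le K\|e_{n-1}\|+C\varrho_{\Delta t}^{-N_t}+\varepsilon_x$, and summing the geometric series finishes. Your extra bookkeeping on the uniformity of $C$ goes beyond what the paper does, since the paper simply folds that into assumption 3.

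The problem is the Bernstein step, where you talk yourself out of a correct claim. You assert that the endpoint region is binding, and that the semi-major excess $(\rho+\rho^{-1})/2-1$ can exceed $\delta$ even when $(\rho-\rho^{-1})/2<\delta$. This is false. It leads you to offer to drop the explicit range $1<\varrho_{\Delta t}<\delta+\sqrt{1+\delta^2}$. The theorem asserts that range, and your fallback to ``any admissible parameter'' does not prove it.

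Write $a=(\rho+\rho^{-1})/2$ and $b=(\rho-\rho^{-1})/2$. Since the foci are $\pm1$, $a^2-b^2=1$. Hence $a-1=b^2/(a+1)<b$, and for small $\delta$ the endpoint excess is only about $b^2/2$, so you have the comparison backwards. More completely, take a point $x+iy$ of the closed region bounded by $E_\rho$ with $1\le|x|\le a$. The bound $y^2\le b^2(1-x^2/a^2)$ together with $a^2-b^2=1$ gives
\begin{align*}
    (|x|-1)^2+y^2\le\Bigl(a-\frac{|x|}{a}\Bigr)^2\le\Bigl(a-\frac{1}{a}\Bigr)^2=\frac{b^4}{a^2}<b^2 .
\end{align*}
For $|x|\le1$ the distance to $[-1,1]$ is $|y|\le b$.

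So the filled ellipse lies in the closed $b$-neighbourhood of $[-1,1]$, with maximal distance $b$ attained at $\pm ib$. That is exactly your own first observation, and it is the whole argument. Therefore $b<\delta$, equivalently $\varrho_{\Delta t}<\delta+\sqrt{1+\delta^2}$, places the closed ellipse strictly inside the open $\delta$-neighbourhood of $[-1,1]$. That neighbourhood lies in the mapped analytic region, because each block is a subinterval of $[0,T]$ and $m_n$ scales distances by $2/\Delta t$.

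This is precisely the paper's one-line justification: the ellipse reaching reference-coordinate distance $\delta$, with the strict inequality keeping it inside the analytic neighbourhood. No stadium adjustment or weakening of the statement is needed.
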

\begin{proof}
    Write $e_n = \|u_h^{(n)}(t_n,\cdot) - u(t_n,\cdot)\|$ for the error of the slice passed out of block $n$, so $e_L$ is the quantity to be bounded. The block solver on block $n$ is supplied with the previous numerical slice, which by assumption (2) of peel and pass carries the error $e_{n-1}$ and no additional interface error. By the stability assumption (4), the portion of the outgoing error caused by this incoming error is at most $K e_{n-1}$. Solving the block with \emph{exact} initial data incurs the assumed per-block discretisation error $C\varrho_{\Delta t}^{-N_t}+\varepsilon_x$ of assumption (3). That assumption is the natural one for an analytic solution. Its time term has this form because the block-restricted exact solution is analytic on the Bernstein ellipse $E_{\varrho_{\Delta t}}$, so the truncation error of its $N_t$-term Legendre expansion decays like $\varrho_{\Delta t}^{-N_t}$ by the standard estimate \cite{trefethen2013approximation}. Its spatial term is the assumed bound $\varepsilon_x$. Combining the propagated incoming error and the local discretisation error,
    \begin{align*}
        e_n \le K e_{n-1} + C\varrho_{\Delta t}^{-N_t} + \varepsilon_x.
    \end{align*}
    Iterating this recursion from $e_0$ and summing the series $\sum_{k=0}^{L-1}K^k = (K^L-1)/(K-1)$ yields the stated bound. The displayed upper bound for $\varrho_{\Delta t}$ is the Bernstein parameter of the ellipse reaching reference-coordinate distance $\delta = 2d/\Delta t$ from $[-1,1]$. The strict inequality ensures that $E_{\varrho_{\Delta t}}$ remains inside the analytic neighbourhood after mapping the block $[t_{n-1},t_n]$ affinely onto $[-1,1]$. If the actual singularity geometry permits a larger ellipse, the same proof applies with that larger admissible parameter.
\end{proof}
For a dissipative problem with exact initial data and $K\le1$, this implies
\begin{align*}
    \|u_h(T,\cdot) - u(T,\cdot)\| \le L\Bigl(C\varrho_{\Delta t}^{-N_t}+\varepsilon_x\Bigr).
\end{align*}
The same propagation argument applies to inhomogeneous linear problems once the per-block consistency term is understood to include the discretisation of the forcing.
The accumulated temporal contribution is $L C\varrho_{\Delta t}^{-N_t}$. For fixed $T$ and $\Delta t=T/L$, admissible parameters may be chosen with $\varrho_{\Delta t}\sim c/\Delta t$ for any fixed $0<c<4d$ (since $\delta+\sqrt{1+\delta^2}\sim 2\delta = 4d/\Delta t$), giving
\[
    L C\varrho_{\Delta t}^{-N_t}
    =
    \mathcal{O}\!\left(\frac{T}{c^{N_t}}\left(\frac{T}{L}\right)^{N_t-1}\right),
    \qquad L\to\infty .
\]
Thus, for fixed final time, the accumulated temporal term decays like $(1/L)^{N_t-1}$. The accumulated spatial contribution in this bound is $L\varepsilon_x$, so convergence under block refinement is guaranteed only if the spatial error is controlled separately, for example by ensuring $\varepsilon_x=o(1/L)$.
\section{Numerical Experiments}\label{sec:experiments}
The numerical experiments were run in Julia \cite{bezanson2017julia} on an Apple M3 Pro MacBook Pro with 18 GB memory, using \texttt{ClassicalOrthogonalPolynomials.jl} \cite{classicalorthogonalpolynomials_github} and \texttt{MultivariateOrthogonalPolynomials.jl} \cite{multivariateorthogonalpolynomials_github}. The Julia code used to generate the numerical experiments and figures is provided in the companion repository archived at \cite{spacetimespectralpass_software}.

We compare a single global space-time solve with blocked peel-and-pass solves for heat, wave, Klein--Gordon, and disk fractional heat problems.
Reported solve times are medians after warmup. Following the ultraspherical spectral method \cite{olver2013fast,olver2020fast}, the highest-degree differential-operator rows are dropped and replaced by the boundary and initial conditions, giving a square block that is factorised by sparse LU. Each timed blocked run constructs the reusable block matrix, computes one sparse LU factorisation for each distinct block operator, and applies that factorisation through all blocks. Resident memory is counted in floating-point scalars: coefficient arrays, right-hand-side and state work arrays, and the nonzero values of the retained $L$ and $U$ factors.

Unless stated otherwise, ``error'' denotes pointwise max. abs. error at final time:
\begin{align*}
    \mathcal{E} = \max_{\mathbf{x} \in \Omega} \bigl| u_h(T,\mathbf{x}) - u(T,\mathbf{x}) \bigr|,
\end{align*}
where $u_h$ is the numerical and $u$ the true solution.

Table \ref{tab:numerical-summary} summarises the numerical experiments: the coefficient counts, timing and resident-memory ratios, and final-time errors at the first resolution where each curve has approximately reached its converged error. The following subsections give the problem descriptions and full convergence plots. The blocked columns in Table \ref{tab:numerical-summary} use the $L=4$ runs, except for the deep-time Klein--Gordon row, which uses $L=100$. The table reports the global and one-block time coefficient counts $N_t^{(g)}$ and $N_t^{(b)}$. Because the spatial truncation is fixed within each comparison, their ratio is the coefficient ratio $R$ from Definition \ref{def:memory-ratios}. The timing ratios are measured whole-run ratios and should be read in light of the reusable-factorisation model of Lemma \ref{lem:solve-cost}. The deep-time Klein--Gordon global solve cannot be run to resolution on the stated test machine, so its global entries are empty.

\begin{table}[t]
\centering
\caption{Summary of timing ratios, resident memory ratios and final-time errors.}
\label{tab:numerical-summary}
\begin{tabular}{lccccccc}
\toprule
Problem & $N_t^{(g)}$ & $N_t^{(b)}$ & $R$ & $T_g/T_b$ & $M_g/M_b$ & $\mathrm{err}_g$ & $\mathrm{err}_b$\\
\midrule
Heat & 24 & 16 & $1.50$ & $1.52$ & $3.00$ & $5.7{\times}10^{-14}$ & $6.4{\times}10^{-15}$\\
Heat, singular & 112 & 16 & $7.00$ & $16.99$ & $42.65$ & $2.4{\times}10^{-14}$ & $2.8{\times}10^{-14}$\\
Wave sys. & 48 & 24 & $2.00$ & $3.89$ & $4.72$ & $2.7{\times}10^{-14}$ & $6.7{\times}10^{-14}$\\
Klein--Gordon sys. & 80 & 32 & $2.50$ & $5.95$ & $6.40$ & $3.8{\times}10^{-13}$ & $2.6{\times}10^{-13}$\\
Klein--Gordon, deep & \textemdash & 96 & \textemdash & \textemdash & \textemdash & \textemdash & $2.1{\times}10^{-12}$\\
Disk heat, $\beta=1$ & 48 & 24 & $2.00$ & $2.58$ & $2.85$ & $2.9{\times}10^{-15}$ & $6.0{\times}10^{-14}$\\
Disk heat, $\beta=0.5$ & 48 & 16 & $3.00$ & $5.34$ & $5.77$ & $2.3{\times}10^{-16}$ & $2.0{\times}10^{-14}$\\
\bottomrule
\end{tabular}
\end{table}

\subsection{Heat Equation}
We compare a global solve with the time spectral element version on
\begin{align}\label{eq:heat1p1}
    u_t - u_{xx} =
    \left(\pi^2\left(1+\tfrac14\sin t\right)+\tfrac14\cos t\right)\sin(\pi x),
\end{align}
on $\Omega = (0,1)$ with zero Dirichlet conditions and initial data
\begin{align*}
    u(0,x) = \sin(\pi x),
\end{align*}
whose exact solution is
\begin{align*}
    u(t,x) = \left(1+\tfrac14\sin t\right)\sin(\pi x).
\end{align*}
We discretise each space-time block with tensor products of Legendre polynomials, using fixed spatial coefficient count $N_x=24$ and varying the time coefficient per block count $N_t$. The first-order-in-time, second-order-in-space operator is assembled together with the initial and Dirichlet boundary conditions as a bordered almost-banded system. We advance from $t=0$ to $T=10$ either as a single global block ($L=1$) or as $L=2$ and $L=4$ uniform blocks linked by peel and pass as in Corollary \ref{cor:legendre}.
Figure \ref{fig:heat1p1-fig1} plots the final-time error against time coefficient count, solve time, and resident memory.
\begin{figure}\centering
    \includegraphics[width=0.31\textwidth]{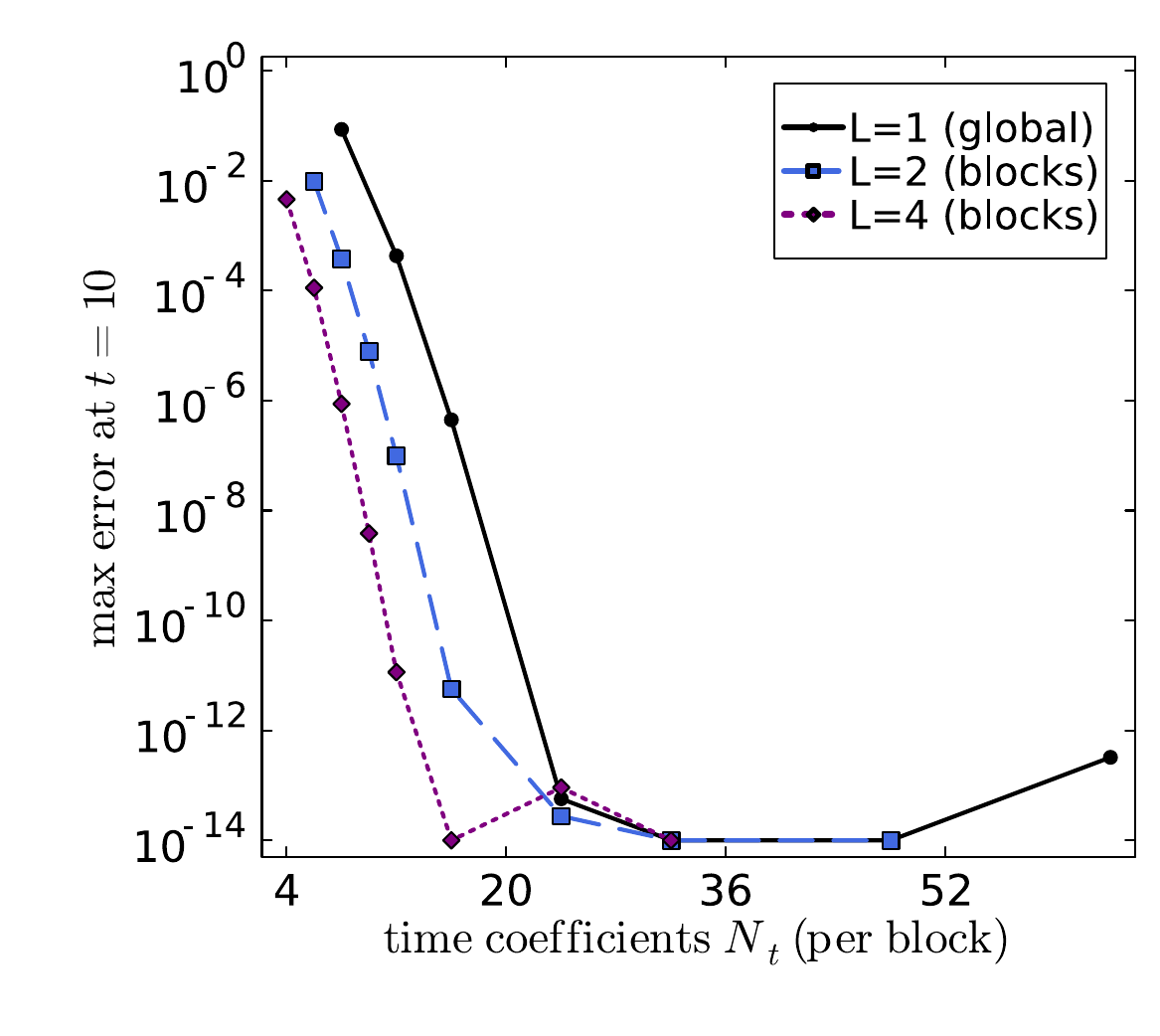}
    \includegraphics[width=0.31\textwidth]{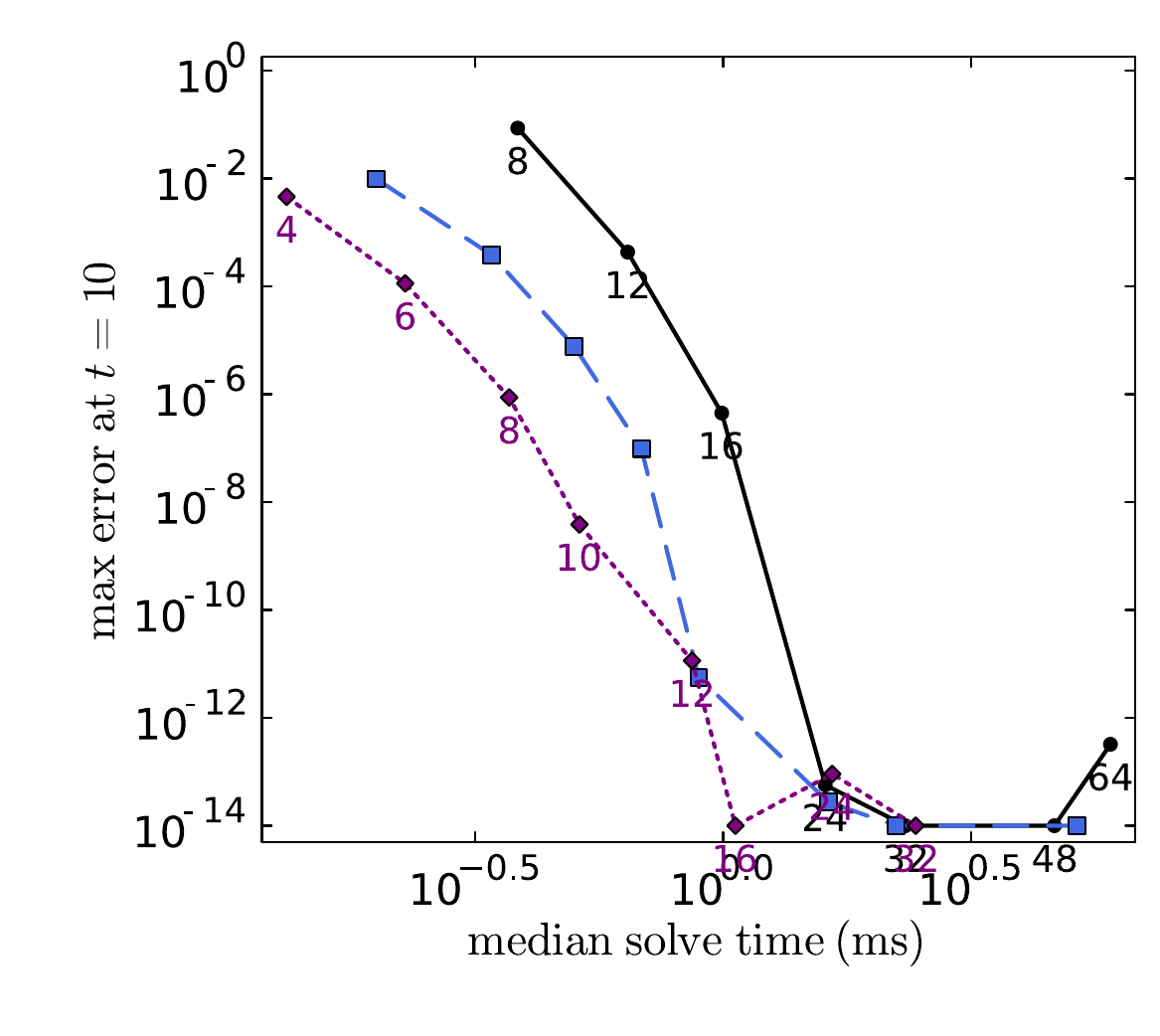}
    \includegraphics[width=0.31\textwidth]{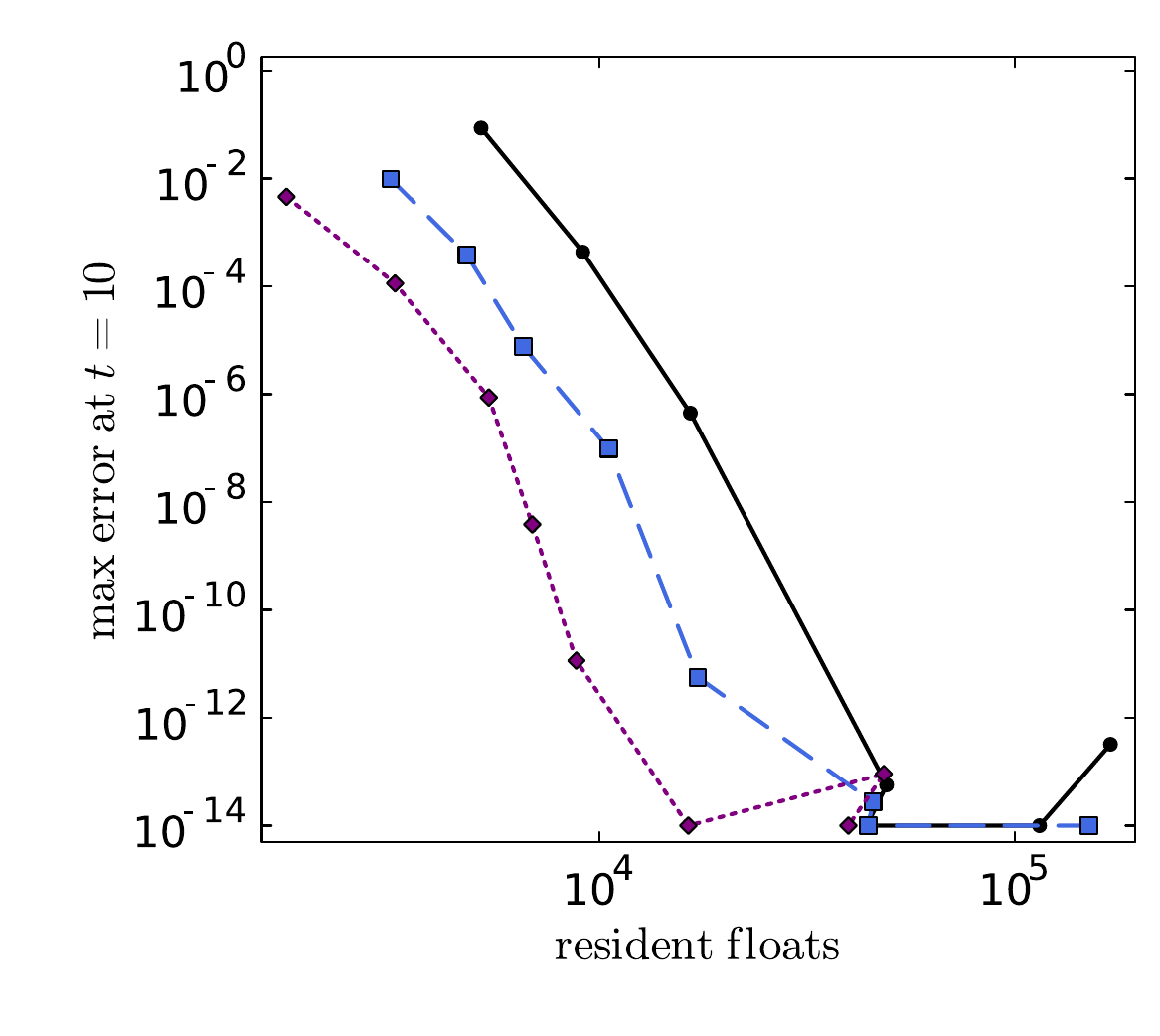}
    \caption{Global vs.\ time-spectral-element solves of the heat equation \eqref{eq:heat1p1} over $[0,10]$, $N_x=24$. Columns: final-time error vs.\ $N_t$ (per block), solve time ($N_t$ labelled at each marker), and resident memory; bottom-left is better.}
    \label{fig:heat1p1-fig1}
    \end{figure}
\subsection{Heat Equation with a Complex-Time Singularity}\label{subsec:heatpole}
The other problems in this section have solutions that are entire in time, so blocking mainly improves memory and solve cost. To isolate the Bernstein-ellipse rate mechanism of Theorem \ref{thm:error}, we repeat the heat experiment with a temporal profile that is analytic but \emph{not} entire. We retain the heat operator and spatial profile $\sin(\pi x)$, but take
\begin{align}\label{eq:heatpole}
    u(t,x) = g(t)\sin(\pi x), \qquad
    g(t)=\frac{1}{1+\bigl((t-t_0)/w\bigr)^2}, \qquad t_0 = 4.3,\quad w = 1.5,
\end{align}
whose analytic continuation has poles at $t = t_0 \pm w\,i$. Thus the equation is again $u_t-u_{xx}=f$ with zero Dirichlet data and initial data $u(0,x)=g(0)\sin(\pi x)$, but now
\begin{align*}
    f(t,x)=\bigl(g'(t)+\pi^2 g(t)\bigr)\sin(\pi x).
\end{align*}
The forcing is expanded in the time test basis exactly as before.

Splitting $[0,10]$ into blocks moves these poles farther away in each block's reference coordinate, increasing the admissible Bernstein parameter. The blocked rate is controlled by the worst local block, so if $\varrho_{\min}$ is the smallest admissible Bernstein parameter over the blocks, the predicted positive slope of the log-error curve against $N_t$ is $\log\varrho_{\min}$. Table \ref{tab:heatpole-rates} gives the distance-only guaranteed rate used in Theorem \ref{thm:error}, the sharper rate obtained from the actual mapped pole locations, and fitted rates from a separate exact-inflow diagnostic. Figure \ref{fig:heatpole} shows the practical effect: the global solve needs roughly $N_t \approx 112$ time coefficients to reach its error floor, whereas $L=4$ blocks reach machine precision with only $N_t = 16$ per block.
\begin{table}[t]
\centering
\caption{Bernstein-rate $\log\varrho$ for the problem in \eqref{eq:heatpole}. Larger values indicate faster asymptotic decay.}
\label{tab:heatpole-rates}
\begin{tabular}{lccc}
\toprule
Discretisation & guaranteed rate & mapped-pole rate & fitted rate\\
\midrule
$L=1$ (global) & $\ge 0.30$ & $0.30$ & $0.31\pm 0.03$\\
$L=2$ (blocks) & $\ge 0.57$ & $0.68$ & $0.71\pm 0.03$\\
$L=4$ (blocks) & $\ge 1.02$ & $1.05$ & $1.03\pm 0.13$\\
\bottomrule
\end{tabular}
\end{table}
\begin{figure}\centering
    \includegraphics[width=0.31\textwidth]{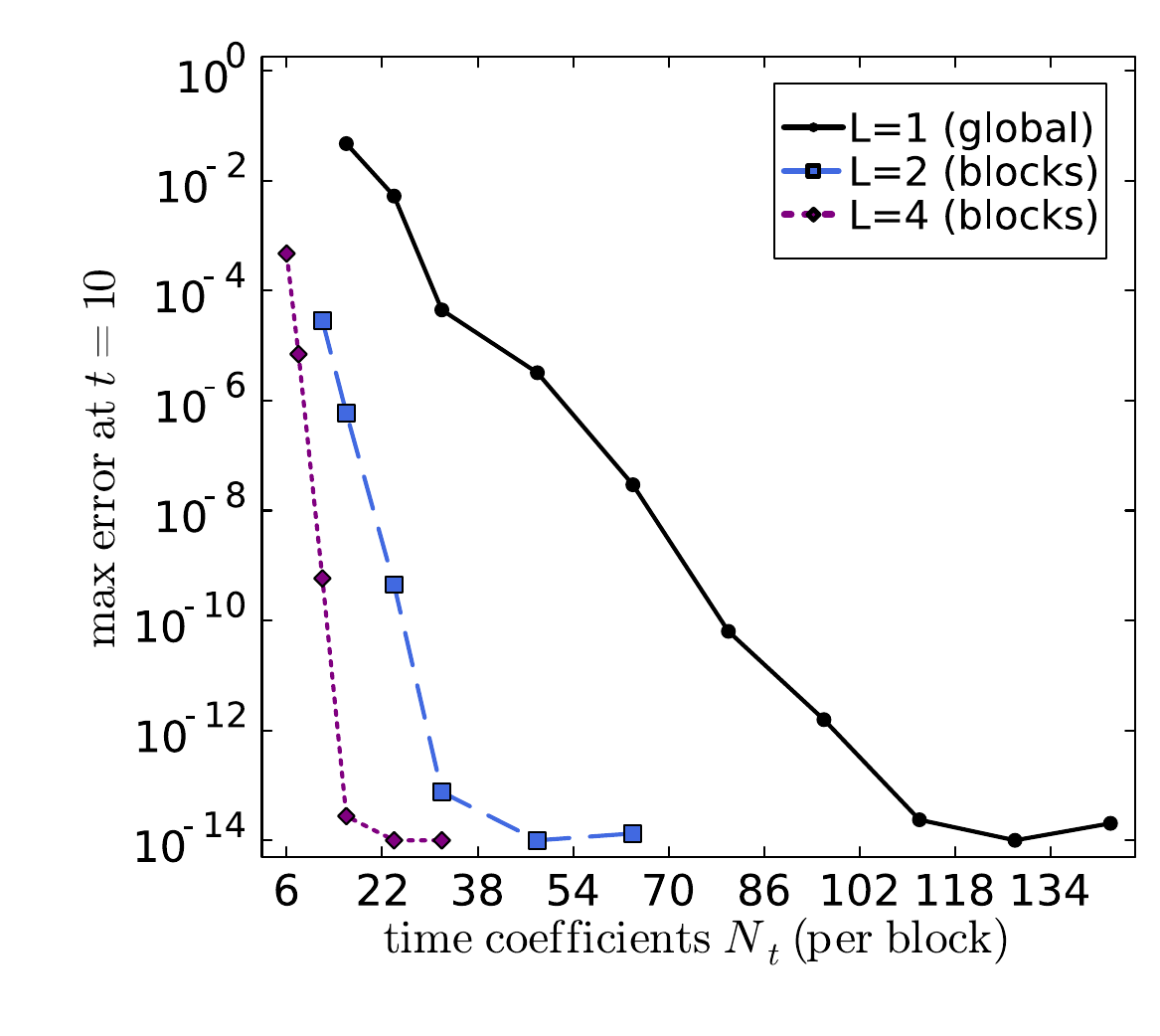}
    \includegraphics[width=0.31\textwidth]{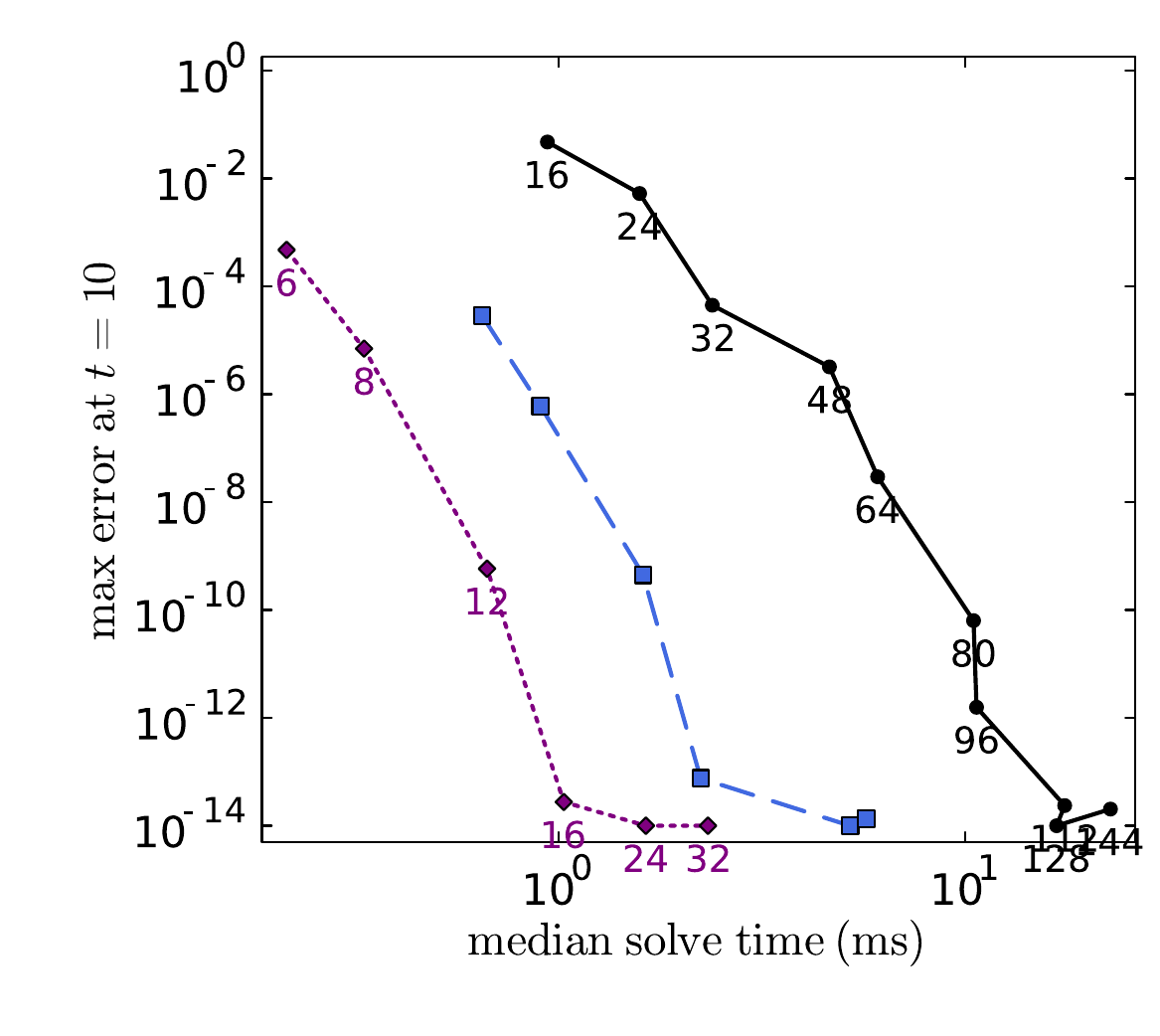}
    \includegraphics[width=0.31\textwidth]{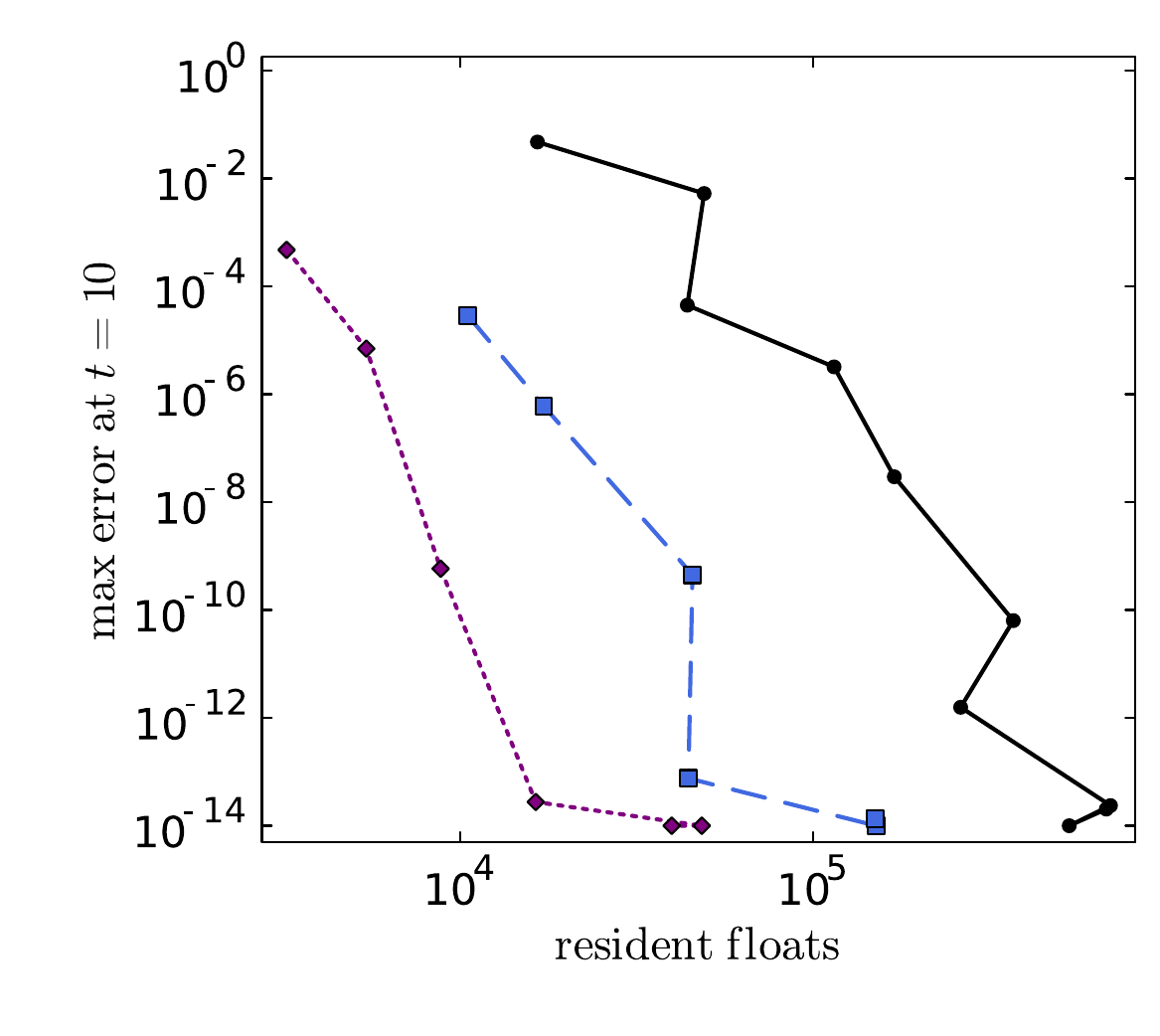}
    \caption{Heat equation with the complex-time singularity \eqref{eq:heatpole} over $[0,10]$, $N_x=24$. Columns: final-time error vs.\ $N_t$ (per block), solve time, and resident memory; bottom-left is better.}
    \label{fig:heatpole}
    \end{figure}
\subsection{Wave Equation}
We consider the wave equation
\begin{align}\label{eq:wave1p1}
    u_{tt} - u_{xx} = 0
\end{align}
on $\Omega = (0,1)$ with zero Dirichlet conditions. We compare two formulations. The derivative-peel formulation solves the scalar second-order equation and, in the blocked case, passes $u$ together with $u_t$ obtained by endpoint differentiation of the polynomial expansion. The first-order system formulation solves $u_t=v$, $v_t=u_{xx}$ and passes the value slices of both state components. This systemisation also changes the global discretisation, and in these experiments it improves the conditioning of both the global solve and the block-by-block solve.  Increasing the spatial resolution reduces the derivative-peel amplification in this example, but the comparison below uses the same spatial resolution in both formulations. To make the coefficient count less forgiving, we use moderately oscillatory data
\begin{align*}
    u(0,x) = \sin(2\pi x), \qquad u_t(0,x) = 0,
\end{align*}
whose exact solution is
\begin{align*}
    u(t,x) = \cos(2\pi t)\sin(2\pi x).
\end{align*}
Figure \ref{fig:wave1p1} compares final-time error, solve time, and resident memory over $[0,10]$, contrasting derivative peeling with systemisation, with the latter clearly preferred.
\begin{figure}\centering
    \includegraphics[width=\textwidth]{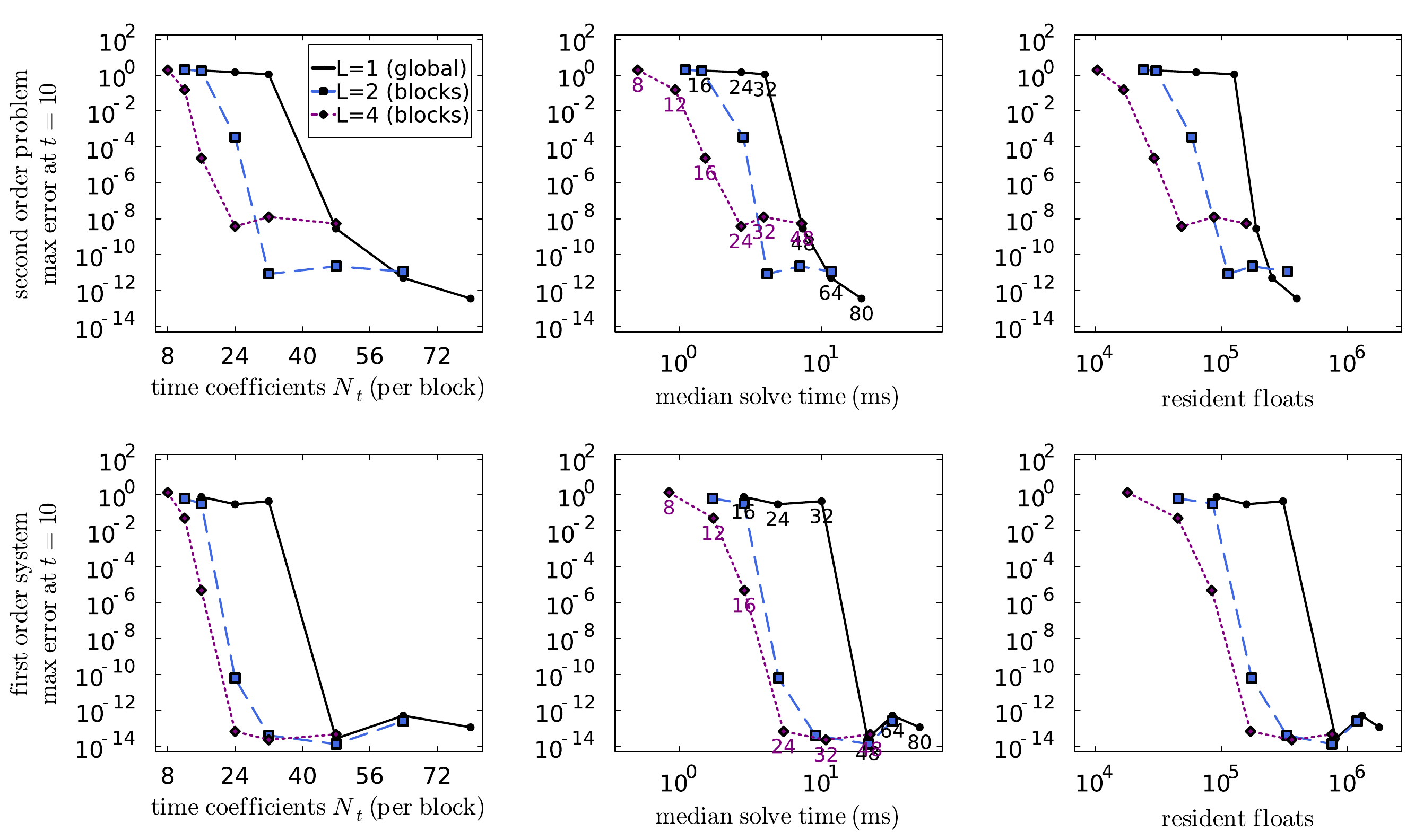}
    \caption{Wave equation \eqref{eq:wave1p1} over $[0,10]$, $u(0,x)=\sin(2\pi x)$, $N_x=32$. Rows: scalar derivative peeling vs.\ first-order system. Columns: final-time error vs.\ $N_t$ (per block), solve time, resident memory.}
    \label{fig:wave1p1}
    \end{figure}
\subsection{Klein--Gordon Equation}
We next add a zeroth-order term to obtain the Klein--Gordon equation
\begin{align}\label{eq:kg}
    u_{tt} - u_{xx} + m^2 u = 0,
\end{align}
on $\Omega = (0,1)$ with zero Dirichlet conditions, $m^2=20$, and initial data $u(0,x)=\sin(2\pi x)$, $u_t(0,x)=0$. The exact solution is
\begin{align*}
    u(t,x)=\cos\!\left(\sqrt{(2\pi)^2+20}\,t\right)\sin(2\pi x).
\end{align*}
Figure \ref{fig:kg} compares error, time, and memory over $[0,10]$. As for the wave equation, we compare scalar second-order blocks with derivative peeling against the first-order state $(u,v)=(u,u_t)$, including the corresponding global solve in each row. As before, the first-order system is the numerically preferable formulation.
\begin{figure}\centering
    \includegraphics[width=\textwidth]{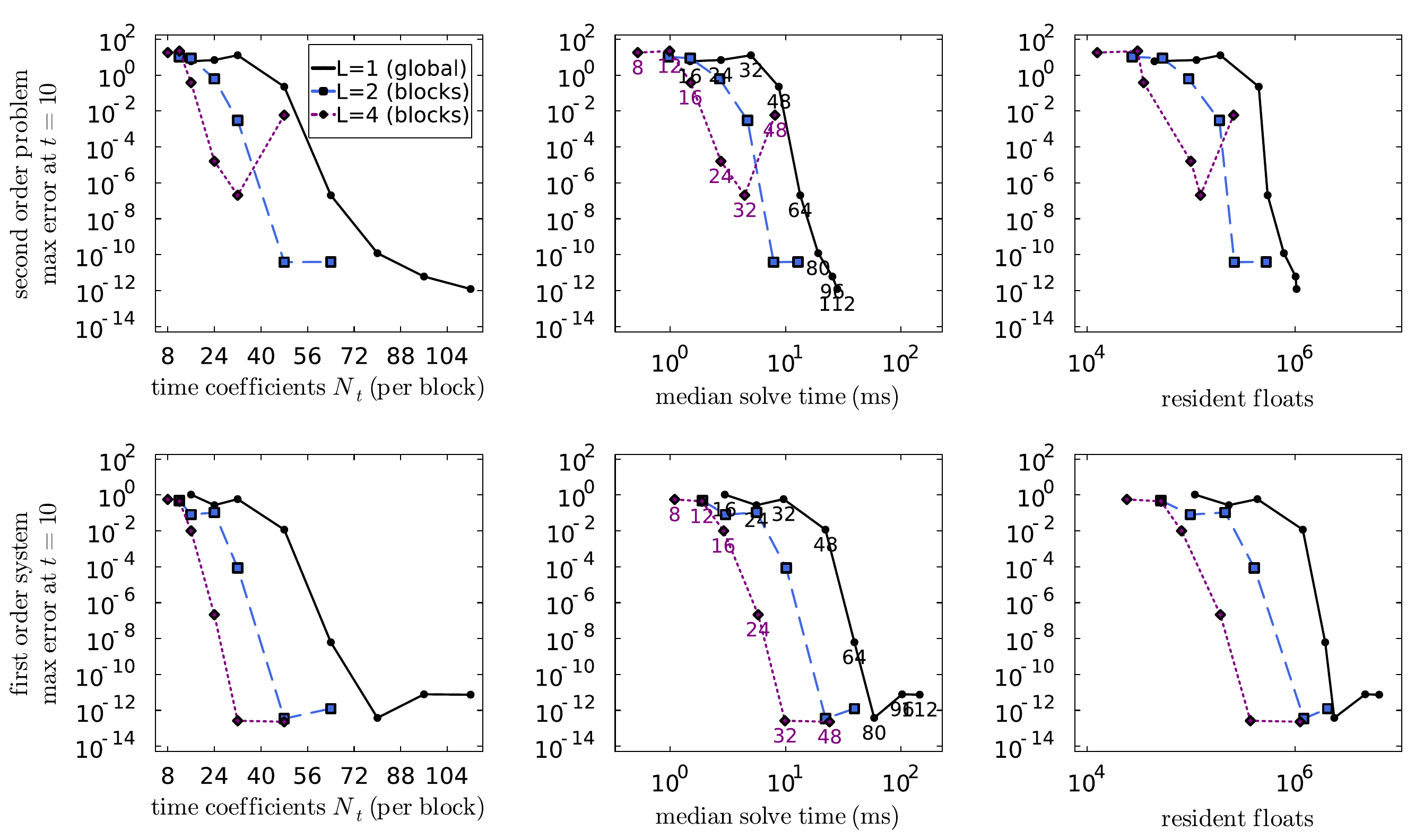}
    \caption{Klein--Gordon \eqref{eq:kg} over $[0,10]$, $m^2=20$, $u(0,x)=\sin(2\pi x)$, $N_x=32$. Rows: scalar derivative peeling vs.\ first-order system. Columns: final-time error vs.\ $N_t$ (per block), solve time, resident memory.}
    \label{fig:kg}
    \end{figure}

The bounded-horizon comparison is conservative because the memory advantage is most pronounced over long horizons. We therefore repeat the first-order Klein--Gordon solve to $T=1000$, about $1227$ periods, using $L=50$ and $L=100$ uniform blocks with $N_x=20$ spatial coefficients, the smallest truncation that resolves the spatial profile. Over-resolving the space enlarges the per-block operator and, over many blocks, slightly amplifies the accumulated roundoff. Figures \ref{fig:kg-deeptime} and \ref{fig:kg-deeptime-stability} show that the blocked method with $L=100$ reaches $\approx 2\times10^{-12}$ final-time error and keeps the passed-slice error below $10^{-11}$ over the full horizon, consistent with the mild accumulation predicted by Theorem \ref{thm:error}. In the convergence sweep, errors below $10^{-10}$ are already reached with $N_t=128$ for $L=50$ and $N_t=72$ for $L=100$, using resident memory around $3.6\times10^6$ and $9.5\times10^5$ floating-point scalars. Extrapolating the required resolution per period from the converged $L=100$ blocked run expects about $100\times72=7200$ time coefficients for a comparable single global interval, which cannot be ran on the test machine. A much smaller global solve attempt at $N_t=256$ remained at $\mathcal{O}(1)$ error. We therefore do not plot the global method in this regime.
\begin{figure}\centering
    \includegraphics[width=0.31\textwidth]{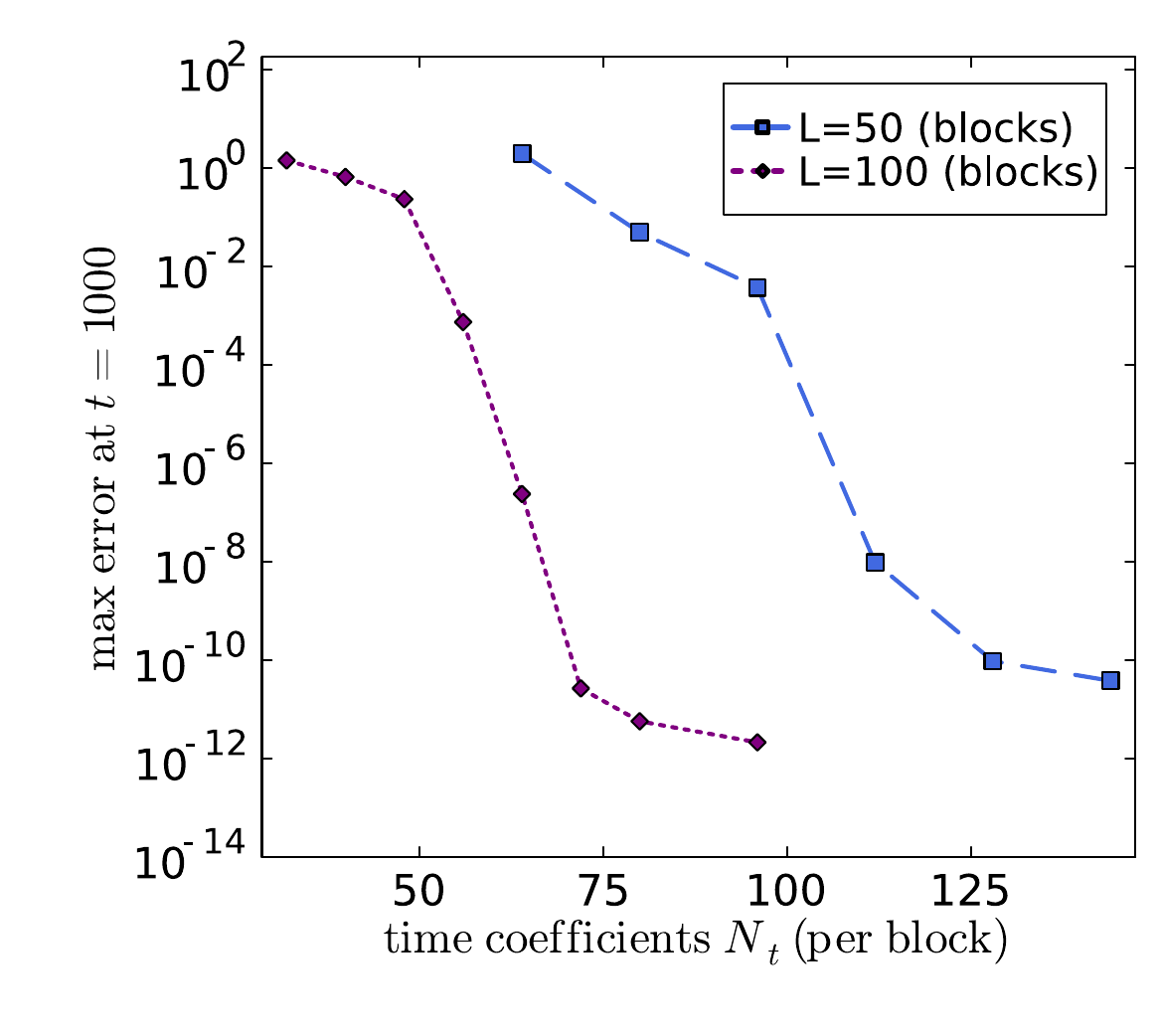}
    \includegraphics[width=0.31\textwidth]{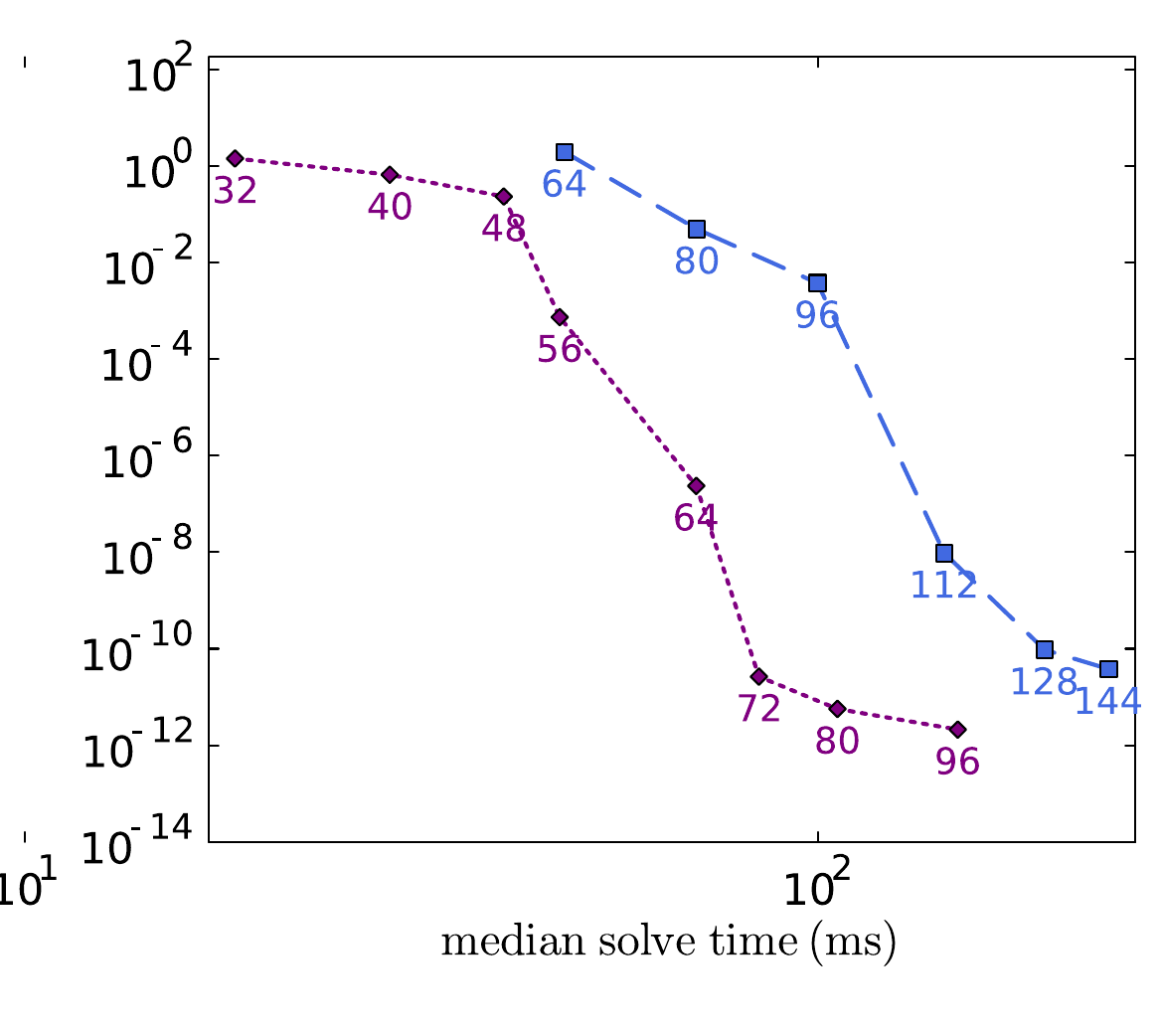}
    \includegraphics[width=0.31\textwidth]{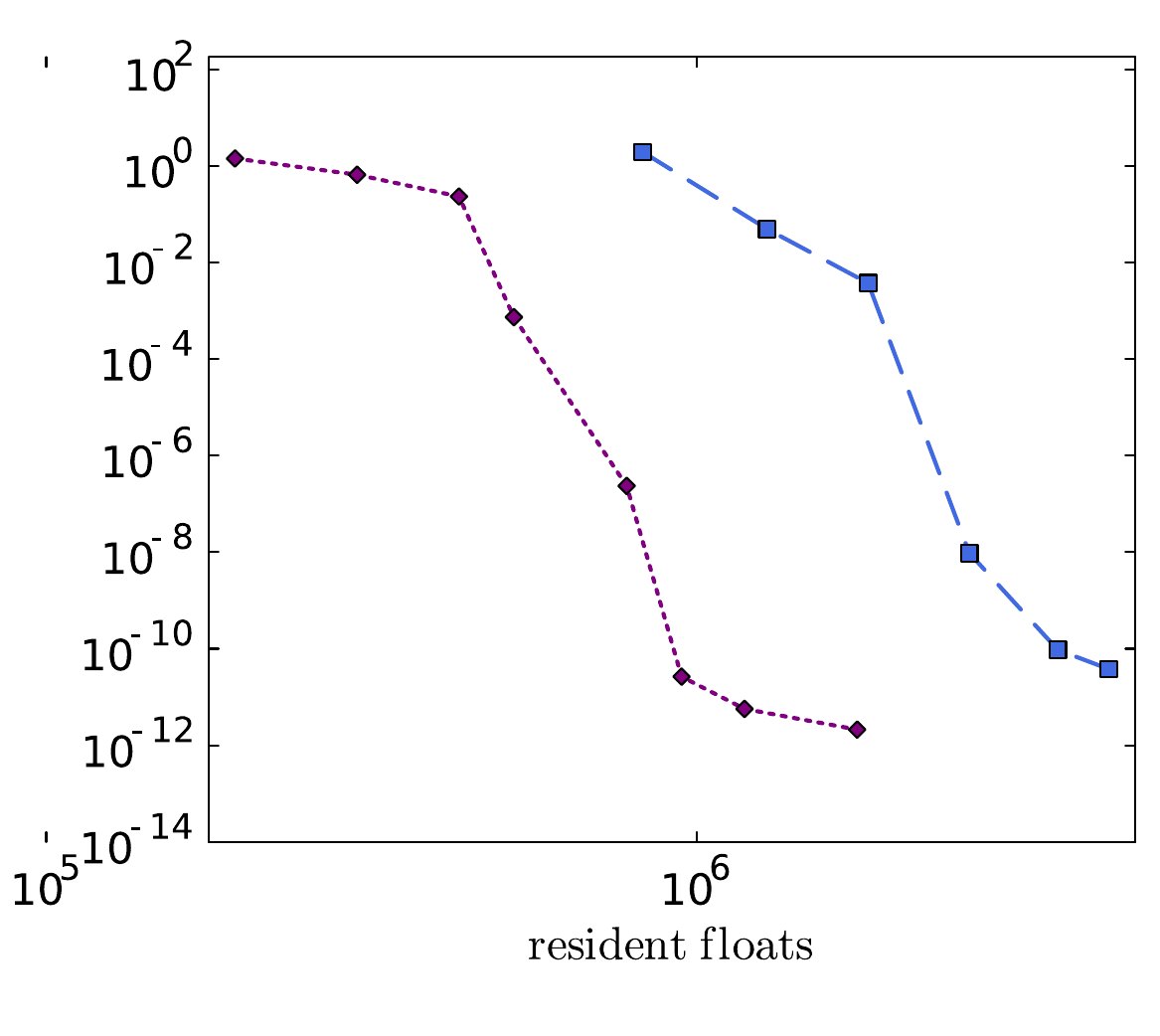}
    \caption{Deep-time Klein--Gordon \eqref{eq:kg} to $T=1000$ ($\approx1227$ periods), $m^2=20$, $N_x=20$, first-order system, $L=50$ and $L=100$ blocks. Columns: final-time error vs.\ $N_t$, solve time, and resident memory (independent of $T$); bottom-left is better.}
    \label{fig:kg-deeptime}
    \end{figure}
\begin{figure}\centering
    \includegraphics[width=0.31\textwidth]{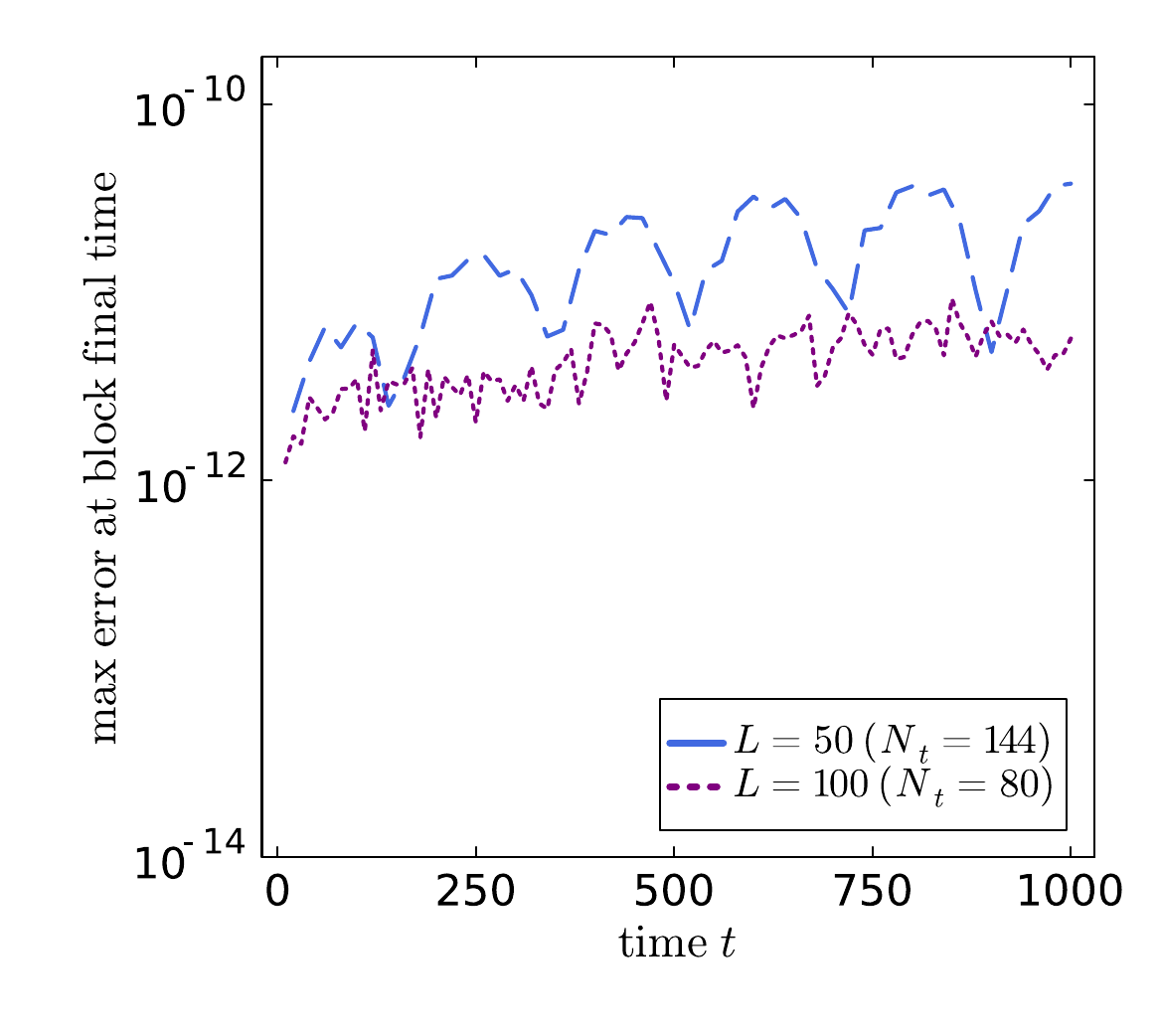}
    \caption{Passed-slice error at each block endpoint for the deep-time Klein--Gordon run \eqref{eq:kg} over $[0,1000]$, below $4\times10^{-11}$ throughout and below $10^{-11}$ for $L=100$. Empirical per-block amplification (Definition \ref{def:block-stability}) is $K\approx1.058$ ($L=50$) and $1.021$ ($L=100$).}
    \label{fig:kg-deeptime-stability}
    \end{figure}
\subsection{Disk Fractional Heat Equation}
To test a genuinely two-dimensional spatial geometry, we solve a manufactured forced fractional heat equation on the unit disk,
\begin{align}\label{eq:diskfractionalheat}
    u_t + (-\Delta)^\beta u = f, \qquad (x,y)\in\mathbb{D},
\end{align}
using Legendre polynomials in time and weighted generalised Zernike polynomials $(1-r^2)^\beta Z^{(\beta)}_{p,q}(x,y)$ in space. For $\beta=1$ this is the ordinary heat equation with homogeneous Dirichlet data encoded by the factor $(1-r^2)$. For $0<\beta<1$ we use the full-space fractional Laplacian applied to the zero extension outside the disk,
\[
    (-\Delta)^\beta v(x)
    =
    c_{d,\beta}\,\operatorname{PV}\!\int_{\mathbb{R}^d}
    \frac{v(x)-v(y)}{|x-y|^{d+2\beta}}\,dy,
    \qquad
    c_{d,\beta}
    =
    \frac{4^\beta\Gamma(d/2+\beta)}{\pi^{d/2}|\Gamma(-\beta)|},
\]
one of the equivalent standard definitions of the fractional Laplace operator \cite{kwasnicki_2017_ten_definitions}. Thus $\beta=1/2$ gives a genuinely fractional disk operator. The action of $(-\Delta)^\beta$ is diagonal in the weighted Zernike representation, following explicit formulae for ball and Zernike bases \cite{gutleb_carrillo_olver_2023_power_law,gutleb_papadopoulos_2023_fractional_laplacians}. The remaining weighted-to-unweighted conversion multiplies by $(1-r^2)^\beta$. For $\beta=1$ this is a polynomial multiplication, so the conversion is banded. For $\beta=1/2$ the factor $(1-r^2)^{1/2}$ is not a polynomial, so the conversion is not banded. The radial weight couples only modes of equal angular momentum, leaving a conversion that is block-diagonal in angular momentum with a dense radial block in each. We form it numerically and drop assembled entries of magnitude at most $10^{-15}$, which removes only roundoff-scale fill. While this conversion can in principle be constructed sparsely, we do not pursue that implementation detail here.

We take a non-radial manufactured solution $u(t,x,y)=e^{-(4+2\beta)t}\Phi_\beta(x,y)$, where $\Phi_\beta$ is a fixed sparse combination of weighted-Zernike modes, and use the complete triangular Zernike truncation with $N_Z=105$ coefficients. In these runs $T=10$, the truncation has $14$ triangular levels, and $\Phi_\beta$ has nonzero coefficients $(1,-.20,.12,-.08,.05,.035,-.030,.026,-.022,.018,-.015,.012,-.010,.008,-.006)$ at coefficient indices $(1,2,4,7,11,16,22,29,37,46,56,67,79,88,100)$. The forcing coefficients are obtained by expanding $f=u_t+(-\Delta)^\beta u$ directly in the time test basis and the Zernike representation. Figure \ref{fig:diskfractionalheat-zernike-summary} compares $\beta=1$ and $\beta=1/2$ for a global solve and for $L=2$ and $L=4$ blocks.

The operator displays in Figure \ref{fig:diskfractionalheat-zernike-spy} come from the sparse matrices used by the sparse LU solve, whose PDE rows have the tensor-product form
\begin{align*}
    C_{\mathbb{D}}\otimes D_t + L_{\mathbb{D}}\otimes S_t,
\end{align*}
where $D_t$ is the sparse time derivative, $S_t$ is the sparse conversion into the common test basis, $C_{\mathbb{D}}$ is the finite Zernike conversion matrix, and $L_{\mathbb{D}}$ is the finite fractional-Laplacian matrix. For this disk experiment only, timing is measured from the assembled matrix onward, so matrix assembly is not counted. This is mildly conservative for the blocked comparison, since generating the larger global operator would cost more than generating a smaller block operator even in a hypothetically optimal directly sparse construction.
\begin{figure}\centering
    \includegraphics[width=\textwidth]{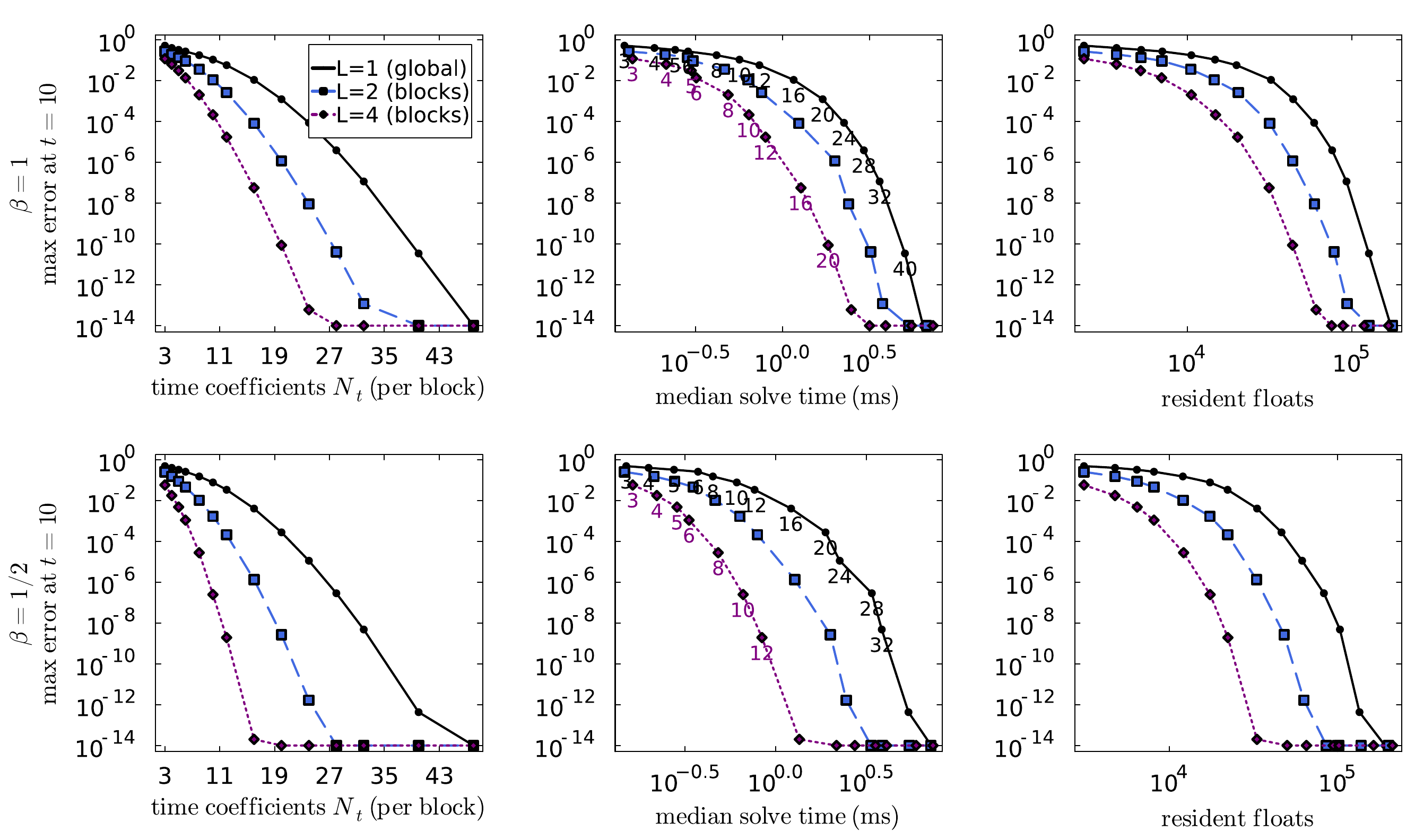}
    \caption{Disk fractional heat \eqref{eq:diskfractionalheat} over $[0,10]$, weighted Zernike basis with $N_Z=105$. Rows: $\beta=1$ and $\beta=1/2$. Columns: final-time error vs.\ $N_t$, solve time, and resident memory; bottom-left is better.}
    \label{fig:diskfractionalheat-zernike-summary}
    \end{figure}
 \begin{figure}\centering
     \subfloat[$\beta=1$]
    {{ \centering \includegraphics[width=0.31\textwidth]{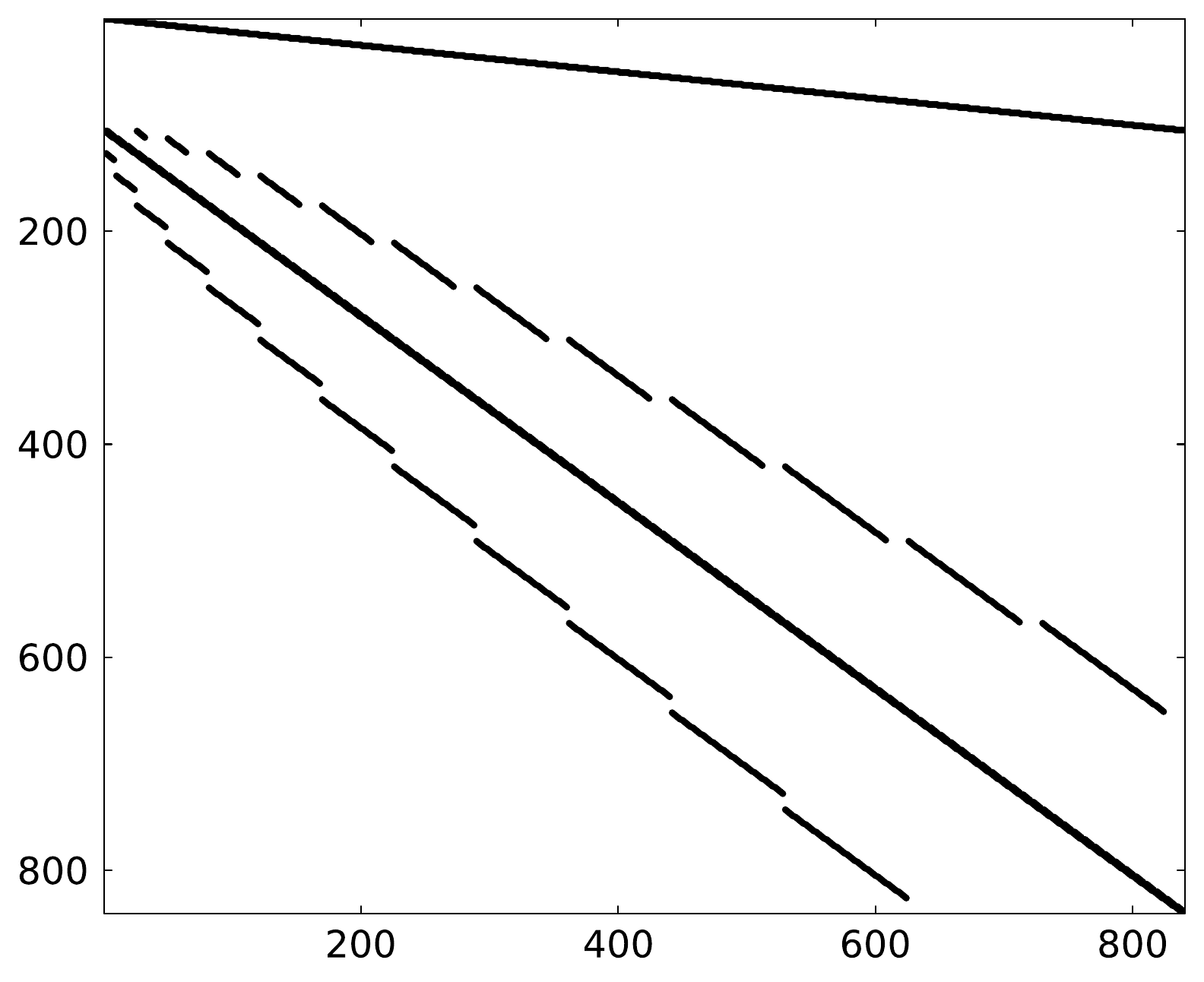} }}
     \subfloat[$\beta=1/2$]
    {{ \centering \includegraphics[width=0.31\textwidth]{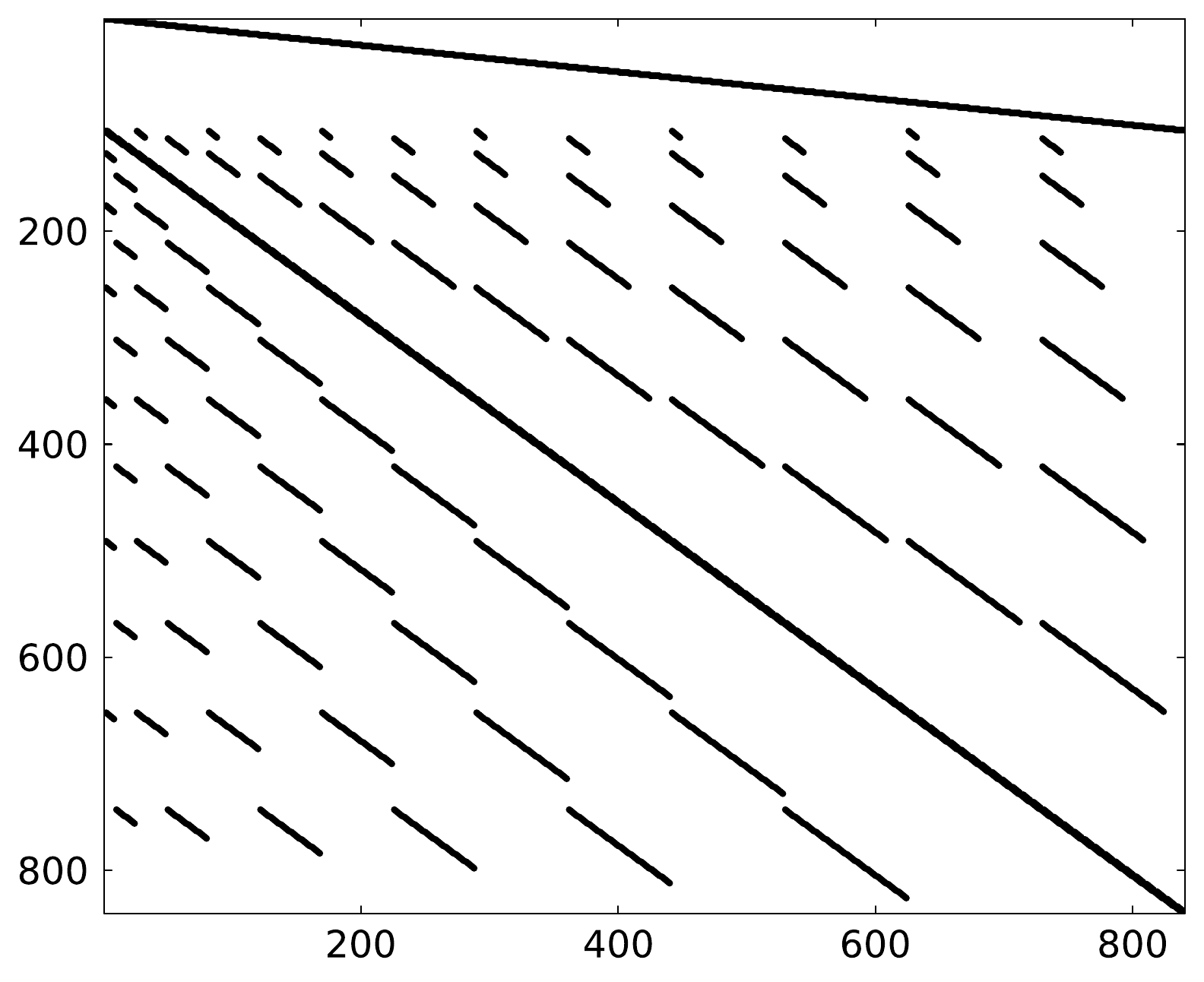} }}
    \caption{Nonzero patterns of Legendre--Zernike single-block operators ($N_t=8$, $N_Z=105$). Only the $\beta=1/2$ operator is thresholded, dropping entries below $10^{-15}$.}
    \label{fig:diskfractionalheat-zernike-spy}
    \end{figure}
\section{Discussion}
We have highlighted that sparse space-time spectral blocks admit exact coefficient-level peel and pass. This enables a form of slab marching, or time stepping, for sparse space-time spectral methods. One solves a local space-time block, efficiently peels off the final slice, and passes those coefficients as data for the next block. For higher-order equations this can be done either by passing endpoint derivative slices or, often more robustly, by rewriting the equation as a first-order system and passing the value slices of the state components. The method can therefore forget the past while retaining spectral accuracy, reducing resident memory to that of a single block and often lowering the time coefficient count required in each solve for temporally resolved long-time problems.\\
There is naturally room for implementation-level optimisation: the structured tensor-product matrices shown in Figures \ref{fig:operator-sparsity} and \ref{fig:diskfractionalheat-zernike-spy} need not be in the best ordering for direct solvers, and ordering choices such as interweaving are already part of sparse spectral linear algebra. Further work should also explore whether direct treatment of higher order systems can be saved in this framework, though systemisation is a straightforward and competitive alternative as shown in the numerical experiments.

\backmatter

\bmhead{Acknowledgments}
The author thanks Avleen Kaur and Tianyi Pu for helpful discussions on space-time spectral methods in 2023 and Mika{\"e}l Slevinsky for the invitation to CAIMS 2026 which prompted the completion of this work.

\bmhead{Data availability}
No external data was used in this work.

\bmhead{Code availability}
The Julia code used to generate the numerical experiments and figures is provided in the companion repository \url{https://github.com/TSGut/SpacetimeSpectralPassExamples.jl} and archived at \cite{spacetimespectralpass_software}.

\bibliography{references}

\end{document}